\numberwithin{equation}{section}
\newtheorem{Theorem}{Theorem}[section]
\newtheorem*{Theorem*}{Theorem}
\newtheorem{Lemma}[Theorem]{Lemma}
\newtheorem{Proposition}[Theorem]{Proposition}
\newtheorem*{main}{Main Proposition}
 { \theoremstyle{definition}
\newtheorem{Definition}[Theorem]{Definition}

\newtheorem{Remark}[Theorem]{Remark} }
\newcommand{\g}{\mathfrak{g}}
\newcommand{\R}{\mathbb{R}}
\newcommand{\N}{\mathbb{N}}
\newcommand{\E}{\mathbb{E}}
\begin{document}
\allowdisplaybreaks

\newcommand{\arXivNumber}{2201.00128}

\renewcommand{\PaperNumber}{058}

\FirstPageHeading

\ShortArticleName{Systolic Inequalities for Compact Quotients of Carnot Groups with Popp's Volume}

\ArticleName{Systolic Inequalities for Compact Quotients\\ of Carnot Groups with Popp's Volume}

\Author{Kenshiro TASHIRO}

\AuthorNameForHeading{K.~Tashiro}

\Address{Department of Mathematics, Tohoku University, Sendai Miyagi 980-8578, Japan}
\Email{\href{mailto:kenshiro.tashiro.b2@tohoku.ac.jp}{kenshiro.tashiro.b2@tohoku.ac.jp}}
\URLaddress{\url{https://sites.google.com/view/kenshiro-tashiro-homepage/home}}

\ArticleDates{Received February 10, 2022, in final form July 28, 2022; Published online August 02, 2022}

\Abstract{In this paper, we give a systolic inequality for a quotient space of a Carnot group~$\Gamma\backslash G$ with Popp's volume. Namely we show the existence of a positive constant $C$ such that the systole of $\Gamma\backslash G$ is less than ${\rm Cvol}(\Gamma\backslash G)^{\frac{1}{Q}}$, where $Q$ is the Hausdorff dimension. Moreover, the constant depends only on the dimension of the grading of the Lie algebra $\g=\bigoplus V_i$. To~prove this fact, the scalar product on $G$ introduced in the definition of Popp's volume plays a key role.}

\Keywords{sub-Riemannian geometry; Carnot groups; Popp's volume; systole}

\Classification{53C17; 26B15; 22E25}

\section{Introduction}

A sub-Riemannian manifold is a triple $(M,E,g)$,
where $M$ is a smooth manifold,
$E$ is a sub-bundle of the tangent bundle $TM$,
and $g$ is a metric on $E$.
If $E=TM$,
then it is an ordinary Riemannian manifold.
Geometry and analysis on sub-Riemannian manifolds have been actively studied in relation to differential operator theory,
geometric group theory,
control theory and optimal transport theory.
In this paper,
we study systolic inequalities on compact quotient spaces of Carnot groups.

We recall systolic inequalities on Riemannian manifolds.
On a Riemannian manifold $(M,g)$,
the \textit{systole} $\operatorname{sys}(M,g)$ is defined by
\[
\operatorname{sys}(M,g)=\inf\{l(c)\mid c\colon\text{non-contractible closed curve}\},
\]
where $l(c)$ denotes the length of $c$.
If $M$ is closed,
then the minimum exists.
A systolic inequality asserts that for a large class of closed Riemannian manifolds,
there is a constant $C>0$ such that
\[
\operatorname{sys}(M,g)\leq C\cdot \operatorname{vol}(M,g)^{\frac{1}{d}},
\]
where $d=\dim M$ and $\operatorname{vol}(M,g)=\int_{M}\mathrm{d}\mu_g$ is the total volume with respect to the natural volume form $\mathrm{d}\mu_g$.
A constant $C$ may depend on the topological type such as the dimension or the genus of the surface.

As an example,
let us consider a flat torus $\Gamma\backslash \E^d$,
where $\Gamma$ is a lattice in the $d$-dimensional Euclidean space $\E^d$.
Then its systole $\operatorname{sys}\big(\Gamma\backslash \E^d\big)$ is equal to the minimum of the length of a~straight segment that connects two points in the lattice $\Gamma$,
and the total volume is equal to the volume of the fundamental domain of $\Gamma$.

The systolic inequality on flat tori is obtained in the following procedure,
for example see~\cite[Section~1]{gro}.
Let $p\colon\E^d\to \Gamma\backslash \E^d$ be the covering map.
Trivially there is a unique positive number $R_0>0$ such that the volume of the $R_0$-ball of $\E^d$ centered at $0$,
say $B(R_0)$,
is equal to the total volume of the flat torus $\Gamma\backslash \E^d$, i.e., $\operatorname{vol}(B(R_0))=\operatorname{vol}\big(\Gamma\backslash \E^d\big)$.
Then the restriction of the covering map $p$ to the $R_0$-ball is not injective.
This implies that there is a non-contractible closed curve in $\Gamma\backslash \E^d$ such that its length is less than or equal to $2R_0$.
Hence the systole of the flat mani\-fold~$\Gamma\backslash \E^d$ is bounded above by
\[\operatorname{sys}\big(\Gamma\backslash \E^d\big)\leq 2R_0=2\omega_d^{-\frac{1}{d}}\operatorname{vol}\big(\Gamma\backslash \E^d\big)^{\frac{1}{d}},\]
where $\omega_d$ is the volume of the unit ball of the Euclidean space $\E^d$.

Such systolic inequalities are proved for surfaces such as non-flat $2$-dimensional torus $($its proof is not in the literature, but mentioned in \cite{pu}$)$,
projective space \cite{pu},
and higher genus ones \cite{bav, gro2, heb}.
For $d$-dimensional Riemannian manifolds with $d>2$,
Gromov showed the existence of the systolic constant $C_d$ for so called essential manifolds \cite{gro2}.

We give systolic inequalities for compact quotient spaces of Carnot groups.
Let $(G,V_1,\langle\cdot,\cdot\rangle_1)$ be a Carnot group,
and $\g=\bigoplus_{i=1}^k V_i$ its grading of the Lie algebra
(see Definition \ref{defcarnot}).
We call a discrete subgroup $\Gamma$ of a simply connected nilpotent Lie group $G$ a \textit{lattice} if it is cocompact and discrete.

Since $\Gamma$ acts isometrically on $G$ from the left,
we can define the quotient sub-Riemannian metric on $\Gamma\backslash G$.
The systole on the quotient space $\Gamma\backslash G$ is defined from this quotient metric.

We denote by $\operatorname{vol}(\Gamma\backslash G)$ the integral of Popp's volume form (Definition \ref{defpopp}).
Popp's volume is the left-invariant volume form defined by the scalar product $\langle\cdot,\cdot\rangle_{\g}$ on the Lie algebra $\g$.
We~describe its construction and properties in Section \ref{sec2}.

Set $d_i=\dim V_i$ and $Q=\sum_{i=1}^kid_i$.
Notice that $Q$ is equal to the Hausdorff dimension of the Carnot group with respect to any left-invariant homogeneous geodesic distance on it \cite{mit} $($its corrected proof can be found in~\cite{bel}$)$.

As we saw in flat tori case,
the lower bound estimate of the volume of the unit ball can be expected to show the desired systolic inequality.
The volume of the unit ball of a Carnot group is more complicated than the Euclidean one.
In \cite{has},
Hassannezhad--Kokarev estimated the volume of the unit ball of corank $1$ Carnot groups,
and gave an upper bound of the volume which depends on the dimension.
It seems hard to apply their method to general Carnot groups.
We indirectly compute a lower bound of the volume to obtain the following proposition,
which implies the main theorem.

\begin{main}
 There exists a constant $D=D(d_1,\dots,d_k)>0$,
 independent of the choice of a scalar product $\langle\cdot,\cdot\rangle_1$,
 such that for any $k$-step Carnot group $(G,V_1,\langle\cdot,\cdot\rangle_1)$ with $\dim V_i=d_i$,
 the volume of the unit ball $\operatorname{vol}(B_{cc}(1))$ is greater than $D$.
\end{main}

The main proposition shall be restated as Theorem~\ref{propmain} in the body text.

Let $\langle\cdot,\cdot\rangle_i$ be the scalar product on $V_i$ which will be defined in the construction of Popp's volume,
Section~\ref{sec2}.
This scalar product on a Carnot group plays a key role to prove the main proposition.
Roughly speaking,
the main idea is to construct the $d_i$-dimensional balls $B^{d_i}(\epsilon_i)\subset (V_i,\langle\cdot,\cdot\rangle_i)$, $i=1,\dots,k$, of radius $\epsilon_i$ centered at $0$ such that
\begin{itemize}\itemsep=0pt
 \item[$1)$] the numbers $\epsilon_1,\dots,\epsilon_k$ depend only on $d_1,\dots,d_k$,
 \item[$2)$] their product $B^{d_1}(\epsilon_1)\times{}\cdots {}\times B^{d_k}(\epsilon_k)$
 is contained in the unit ball $B_{cc}(1)$ via the identification $\exp\colon \g\simeq G$.
\end{itemize}

\begin{Remark}\label{rmk11}
 In fact,
 the radius $\epsilon_1,\dots,\epsilon_k$ will be chosen so that they depend only on $d_1$ and~$k$,
 while the volume of the balls $B^{d_1}(\epsilon_1),\dots,B^{d_k}(\epsilon_k)$ need the information of the dimensions $d_1,\dots,d_k$.
\end{Remark}

The Ball--Box theorem asserts a similar statement.
Let $M$ be a sub-Riemannian manifold and~$B_{sR}(x,R)$
the $R$-ball centered at $x$.
Fix a relatively compact chart $(U,(x_1,\dots,x_n))$ around~$x$.
The Ball--Box theorem claims the existence of positive constants $c$ and~$C$,
such that for all $R>0$ with $B_{sR}(x,R)\subset \overline{U}$,
\[
\mathrm{Box}^{d_1}(cR)\times \cdots\times \mathrm{Box}^{d_k}\big(cR^k\big)\subset B_{sR}(x,R)\subset \mathrm{Box}^{d_1}(CR)\times\cdots \times \mathrm{Box}^{d_k}\big(CR^k\big),
\]
where $\mathrm{Box}^{d_i}(r)$ is the $d_i$-dimensional box of size $r$ centered at $x$ in the chart,
and $d_1,\dots,d_k$ are determined by the nilpotentization at $x$.
The constants $c$ and $C$ depend on the choice of a~chart.
Thus it is a priori impossible to control the constants $c$ and $C$ by topological/geometric invariants of $M$.

The second claim just before Remark \ref{rmk11} asserts that if we equip a Carnot group $G$ with the global coordinates induced from the scalar products $\langle\cdot,\cdot\rangle_i$'s on $\bigoplus V_i=\g\simeq G$,
then the unit ball contains the direct product of balls with their radius controlled by $d_1$, $k$.
Therefore the second claim can be regarded as a quantitative version of (a part of) the Ball--Box theorem.

The main proposition implies the main theorem of this paper.

\begin{Theorem}\label{thmmain}
There is a positive constant $C=C(d_1,\dots,d_k)$,
independent of the choice of a~scalar product $\langle\cdot,\cdot\rangle_1$,
 such that for any Carnot group $(G,V_1,\langle\cdot,\cdot\rangle_1)$ with the grading $\dim V_i=d_i$,
and any lattice $\Gamma<G$,
\[
\operatorname{sys}(\Gamma\backslash G)\leq C\cdot \operatorname{vol}(\Gamma\backslash G)^{\frac{1}{Q}}.
\]
\end{Theorem}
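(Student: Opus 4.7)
The plan is to mimic, almost verbatim, the covering-space argument sketched for the flat torus in the introduction, with Lebesgue measure and Euclidean balls replaced by Popp's volume and Carnot--Carath\'eodory balls, and with the explicit Euclidean formula $\operatorname{vol}(B(R_0))=\omega_d R_0^d$ replaced by the Main Proposition. The heavy lifting has been done in the Main Proposition; the present theorem is essentially its packaging as a systolic bound.

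First I would record how the dilation structure of the Carnot group interacts with the two quantities involved. The intrinsic dilations $\delta_r$ ($r>0$), defined on the grading by $\delta_r|_{V_i}=r^i\operatorname{id}$, are automorphisms of $G$ that scale the Carnot--Carath\'eodory distance homogeneously by $r$, so $\delta_r(B_{cc}(1))=B_{cc}(r)$, and scale Popp's volume by $r^Q$ (because $Q=\sum id_i$ is the weight of the top exterior power of $\g$). Combined with the Main Proposition this gives
\[
\operatorname{vol}(B_{cc}(r)) = r^Q \operatorname{vol}(B_{cc}(1)) \geq D r^Q,
\]
with $D=D(d_1,\dots,d_k)$ independent of $\langle\cdot,\cdot\rangle_1$.

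Next I would run the pigeonhole argument. Since Popp's volume is left-invariant it descends to $\Gamma\backslash G$, and $\operatorname{vol}(\Gamma\backslash G)$ coincides with the Popp volume of any fundamental domain for $\Gamma$. Choose any $R>\bigl(\operatorname{vol}(\Gamma\backslash G)/D\bigr)^{1/Q}$, so that $\operatorname{vol}(B_{cc}(R))>\operatorname{vol}(\Gamma\backslash G)$. Then the covering map $p\colon G\to \Gamma\backslash G$ cannot be injective on $B_{cc}(R)$, so there exist distinct $x,y\in B_{cc}(R)$ and a nontrivial $\gamma\in\Gamma$ with $y=\gamma x$. A length-minimizing horizontal curve from $x$ to $y$ projects to a closed loop in $\Gamma\backslash G$ of length at most $d_{cc}(x,y)\leq d_{cc}(e,x)+d_{cc}(e,y)<2R$; since $G$ is simply connected and $\Gamma$ acts freely and properly discontinuously, $\pi_1(\Gamma\backslash G)\cong\Gamma$ and this loop represents the nontrivial class $[\gamma]$, so it is non-contractible. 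Letting $R\downarrow\bigl(\operatorname{vol}(\Gamma\backslash G)/D\bigr)^{1/Q}$ yields
\[
\operatorname{sys}(\Gamma\backslash G) \leq 2 D^{-1/Q} \operatorname{vol}(\Gamma\backslash G)^{1/Q},
\]
and one takes $C(d_1,\dots,d_k)=2D(d_1,\dots,d_k)^{-1/Q}$.

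The main obstacle is not in the present theorem but rather in the Main Proposition, where the constant $D$ has to be extracted with dependence only on the grading dimensions, not on the scalar product $\langle\cdot,\cdot\rangle_1$. Once that is granted, the only points in the above deduction that deserve checking are the homogeneity behavior of Popp's volume and of $d_{cc}$ under $\delta_r$, the equality between $\operatorname{vol}(\Gamma\backslash G)$ and the Popp volume of a fundamental domain, and the identification $\pi_1(\Gamma\backslash G)=\Gamma$ used to guarantee non-contractibility; each of these is standard for discrete cocompact subgroups of a simply connected nilpotent Lie group.
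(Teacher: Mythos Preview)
Your proposal is correct and follows essentially the same argument as the paper: use the dilation homogeneity $\operatorname{vol}(B_{cc}(R))=R^Q\operatorname{vol}(B_{cc}(1))$, invoke the Main Proposition for the uniform lower bound $\operatorname{vol}(B_{cc}(1))\geq D$, and apply the covering/pigeonhole argument to produce a non-contractible loop of length at most $2R_0$, yielding $C=2D^{-1/Q}$. Your write-up is slightly more explicit than the paper's (you spell out the non-injectivity of $p$ and the identification $\pi_1(\Gamma\backslash G)\cong\Gamma$, and use a limiting $R$ rather than an exact $R_0$), but the substance is identical.
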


\begin{proof}
Let $B_{cc}(R)$ be the ball in $(G,V_1,\langle\cdot,\cdot\rangle_1)$ of radius $R$ centered at
the identity element $e$.
It is well known that the volume of the $R$-ball satisfies
\[
\operatorname{vol}(B_{cc}(R))=\operatorname{vol}(B_{cc}(1))R^Q
\]
under a Haar volume,
see (\ref{homogeneous}).
On the other hand,
there is a positive number $R_0>0$ such that $\operatorname{vol}(B_{cc}(R_0))=\operatorname{vol}(\Gamma\backslash G)$.
It implies that the systole of $\Gamma\backslash G$ is less than or equal to $2R_0$.
Combined with the main proposition,
\[
\operatorname{sys}(\Gamma\backslash G)\leq 2R_0=2\operatorname{vol}(B_{cc}(1))^{-\frac{1}{Q}}\operatorname{vol}(\Gamma\backslash G)^{\frac{1}{Q}}\leq 2D^{-\frac{1}{Q}}\operatorname{vol}(\Gamma\backslash G)^{\frac{1}{Q}}.\tag*{\qed}
\]
\renewcommand{\qed}{}
\end{proof}

An example of a Carnot group is the Euclidean space $\E^d$.
Thus Theorem \ref{thmmain} is a generalization of the systolic inequality for flat tori.
\begin{Remark}
 We can write a constant $C$ by using the Hausdorff dimension $Q$ since the numbers $d_1,\dots,d_k$ can be controlled by $Q$.
\end{Remark}

\section{Carnot groups}\label{sec2}

Let $G$ be a simply connected nilpotent Lie group,
and $\g$ its Lie algebra.
The nilpotent Lie algebra~$\g$ is said to be \textit{graded} if $\g$ has a direct sum decomposition $\g=\bigoplus_{i=1}^k V_i$ such that $[V_i,V_j]\subset V_{i+j}$.
We will identify the Lie algebra $\g$ to the tangent space $T_eG$,
and call a left-invariant vector field $v\in \g$ a \textit{horizontal vector} if it is in $V_1$.

Let $\langle\cdot,\cdot\rangle_1$ be an scalar product on $V_1$.
Then we can define a sub-Riemannian structure on $G$ as follows.
For all $g\in G$,
we define a fiber-wise scalar product $\langle\cdot,\cdot\rangle_g$ on $L_{g\ast}V_1$ by $\langle L_{g\ast}v_1,L_{g\ast}v_2\rangle_g=\langle v_1,v_2\rangle$ for any couple of horizontal vectors $v_1,v_2\in V_1$.
Then the sub-bundle $E=\bigsqcup_{g\in G}L_{g\ast}V_1$ and the inner metric
$\langle\cdot,\cdot\rangle_g$ defines a left-invariant sub-Riemannian metric.

\begin{Definition}[Carnot group]\label{defcarnot}
 A triple $(G,V_1,\langle\cdot,\cdot\rangle)$ is called a Carnot group.
\end{Definition}

We say that an absolutely continuous curve $c\colon[a,b]\to G$ is \textit{horizontal} if the derivative~$\dot{c}(t)$ is in $L_{c(t)\ast}V_1$ for a.e.\ $t\in[a,b]$.
The length of a horizontal curve is given by the integral $l(c)=\int_a^b\|\dot{c}(t)\|_{c(t)}\mathrm{d}t$,
and the distance of two points in the sub-Riemannian manifold $(G,V_1,\langle\cdot,\cdot\rangle)$ is given by the infimum of the length of horizontal curves joining these points.
We call it the \textit{Carnot--Carath\'eodory distance} and denote by $\mathsf{d}_{cc}$.
We also denote by $B_{cc}(R)$ the ball in $G$ of radius $R>0$ centered at the identity element $e$.

{\bf Dilation.}
For a positive number $t>0$ and $X_i\in V_i$, $i=1,\dots,k$,
define the Lie algebra isomorphism $\delta_t\colon\g\to\g$ by
\[
\delta_t\Bigg(\sum_{j=1}^kX_j\Bigg)=\sum_{j=1}^kt^jX_j.
\]
This isomorphism $\delta_t$ is called the \textit{dilation}.

Let $\exp\colon \g\to G$ be the exponential map.
Since the group $G$ is simply connected and nilpotent,
the exponential map is a diffeomorphism.
So we will identify group elements in $G$ to vectors in $\g$ via the exponential map.
Moreover we can regard the dilation $\delta_t$ as the Lie group automorphism.

The origin of the name dilation comes from the following property.
Let $c$ be a length minimizing curve joining two points $x,y\in G$.
Since the derivative of the curve $c$ is in the left translation of $V_1$ a.e.,
the length of the curve $\delta_t\circ c$ is equal to $t\cdot \operatorname{length}(c)$.
It implies that $\mathsf{d}_{cc}(\delta_t(x),\delta_t(y))=t\mathsf{d}_{cc}(x,y)$.
Moreover,
it also implies that the volume of the ball of radius $R$ satisfies
\begin{equation}\label{homogeneous}
 \operatorname{vol}(B_{cc}(R))=\operatorname{vol}(B_{cc}(1))R^Q,
\end{equation}
with respect to a Haar volume.

{\bf Popp's volume.}
Popp's volume is introduced by Montgomery in~\cite{mon} as a gene\-ra\-li\-za\-tion of a~Riemannian canonical volume form $\mathrm{d}\mu_g=\sqrt{\det(g)}\,\mathrm{d}x_1\wedge\cdots\wedge \mathrm{d}x_d$.
The qualitative properties of Popp's volume has been actively studied.
By the result of Barilari--Rizzi~\cite{bar},
Popp's volume is characterized as an (local) isometric invariant volume on equiregular sub-Riemannian manifolds.
Agrachev--Barilari--Boscain exhibited a condition for Popp's volume being a scalar multiple of the spherical Hausdorff volume in \cite{agr2}.
The qualitative study of Popp's volume on a Carnot group is not interesting since it is a Haar volume.
Our interest is in the quantitative properties of Popp's volume.

Popp's volume is constructed as follows.
For $i\geq 2$,
we inductively define the linear map $\phi_i\colon V_1^{\otimes i}\to V_i$ by
\begin{gather*}
\phi_2(X_1\otimes X_2)=[X_1,X_2],
\\
\phi_i(X_1\otimes \cdots \otimes X_i)=[X_1,\phi_{i-1}(X_2\otimes \cdots\otimes X_i)],\qquad i\geq 3.
\end{gather*}
Clearly the linear map $\phi_i$ is surjective.
We will shortly write
\[
[X_1,\dots,X_i]:=\phi_i(X_1\otimes\cdots\otimes X_i).
\]
Recall that on the tensor product space $V_1^{\otimes i}$,
we can define the canonical scalar product by
\[
\langle u_1\otimes\cdots\otimes u_i,v_1\otimes \cdots\otimes v_i\rangle_{\otimes i}=\prod_{j=1}^i\langle u_j,v_j\rangle_1.
\]
From the scalar product space $\big(V_1^{\otimes i},\langle\cdot,\cdot\rangle_{\otimes i}\big)$,
we set an scalar product on $V_i$ by using the following lemma.

\begin{Lemma}[{\cite[Lemma~20.3]{agr}}]
 Let $E$ be an scalar product space,
 $V$ a vector space,
 and $f{:}$ $E\to V$ a surjective linear map.
 Then $f$ induces an scalar product on $V$ such that the norm of $v\in V$~is
 \[
 \|v\|_{V}=\min\{\|u\|_E\mid f(u)=v\}.
 \]
\end{Lemma}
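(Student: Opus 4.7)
The plan is to exploit the inner product on $E$ to split $E$ orthogonally as $E=\ker(f)\oplus \ker(f)^{\perp}$, and to transport the induced scalar product from $\ker(f)^{\perp}$ to $V$ via the restriction of $f$. Concretely, because $f$ is surjective and $\ker(f)^{\perp}\cap \ker(f)=\{0\}$, the restricted map $\bar f:=f|_{\ker(f)^{\perp}}\colon \ker(f)^{\perp}\to V$ is a linear bijection. Define the scalar product on $V$ by pulling back via $\bar f^{-1}$:
\[
\langle v_1,v_2\rangle_V:=\langle \bar f^{-1}(v_1),\bar f^{-1}(v_2)\rangle_E.
\]
By construction this is a genuine scalar product on $V$, and I would begin the proof by checking this single line of definition.

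The core step is to verify the variational formula for the norm. Fix $v\in V$ and let $\tilde v:=\bar f^{-1}(v)\in \ker(f)^{\perp}$. Then $f^{-1}(v)=\tilde v+\ker(f)$, so an arbitrary element of the preimage has the shape $u=\tilde v+w$ with $w\in \ker(f)$. Since $\tilde v\perp w$ in $E$,
\[
\|u\|_E^{2}=\|\tilde v+w\|_E^{2}=\|\tilde v\|_E^{2}+\|w\|_E^{2}\geq \|\tilde v\|_E^{2},
\]
with equality exactly when $w=0$. Thus the minimum in $\{\|u\|_E\mid f(u)=v\}$ is attained uniquely at $u=\tilde v$, and equals $\|\tilde v\|_E=\|v\|_V$ by the defining formula for $\langle \cdot,\cdot\rangle_V$. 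This proves the displayed identity.

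The only possible snag is that $E$ may be infinite-dimensional, in which case one would need the existence of an orthogonal complement of $\ker(f)$; however, $\ker(f)$ has finite codimension (equal to $\dim V$, which is finite in every application here, namely $V=V_i$), so $E=\ker(f)\oplus \ker(f)^{\perp}$ is automatic and no completeness hypothesis is required. Thus the whole argument is really elementary linear algebra once one has the orthogonal decomposition. The main conceptual point, which I would emphasize in the writeup, is that the scalar product on $V$ is nothing but the inherited inner product on the orthogonal complement of $\ker(f)$ under the canonical identification $\ker(f)^{\perp}\simeq V$.
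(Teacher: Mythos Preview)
Your argument is correct and is exactly the standard proof of this fact: the orthogonal splitting $E=\ker(f)\oplus\ker(f)^{\perp}$ identifies $V$ with $\ker(f)^{\perp}$, and the Pythagorean identity yields the variational formula for the norm. Note, however, that the paper does not actually supply a proof of this lemma; it is simply quoted from \cite[Lemma~20.3]{agr}, so there is no ``paper's own proof'' to compare against. Your write-up would serve perfectly well as the missing argument.
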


Applying the above lemma to the surjective linear map $\phi_i$,
we obtain the scalar product $\langle\cdot,\cdot\rangle_i$ on each $V_i$ for $i=2,\dots,k$,
and on their direct sum $\g=\bigoplus V_i$.
We denote by $\langle\cdot,\cdot\rangle_{\g}$ the induced scalar product on $\g$.
\begin{Definition}\label{defpopp}
 Popp's volume on a Carnot group $(G,V_1,\langle\cdot,\cdot\rangle_1)$ is the left-invariant volume form induced from the scalar product $\langle\cdot,\cdot\rangle_{\g}$ on $\g\simeq T_eG$.
\end{Definition}

{\bf The Baker--Campbell--Hausdorff formula.}
Roughly speaking,
the Baker--Campbell--Haus\-dorff formula (BCH formula) describes the solution for $Z$ to the equation $Z=\log(\exp(X_1)\times\exp(X_2))=X_1\cdot X_2$.
It links the Lie group product on $G$ and the Lie bracket on $\g$ by using combinatorial coefficients.
Namely,
for each positive integer $p\in\N$ and multi-index $(i_1,\dots,i_p)\in\{1,2\}^p$,
there is a constant $c_{(i_1,\dots,i_p)}$ such that
\[
X_1\cdot X_2=X_1+X_2+\frac{1}{2}[X_1,X_2]+\sum_{p\geq 3}\sum_{(i_1,\dots,i_p)\in \{1,2\}^p}c_{(i_1,\dots,i_p)}[X_{i_1},\dots,X_{i_p}].
\]

In this paper,
we will use the following formulae which are obtained by applying the BCH formula several times.
Set $\mathbb{I}_j^p=\{1,\dots,j\}^p$ for positive integers $j,p\in\N$:
\begin{itemize}\itemsep=0pt
 \item For each $(i_1,\dots,i_p)\!\in\!\mathbb{I}_{2k}^p$,
 there is a constant $\alpha_{(i_1,\dots,i_p)}$ such that for any vectors $X_1,\dots,X_{2k}$ with $X_i,X_{i+k}\in V_i$, $i=1,\dots,k$,
 \begin{equation}
 \Bigg(\sum_{i=1}^kX_i\Bigg)\Bigg(\sum_{i=1}^k X_{i+k}\Bigg)=\sum_{i=1}^{2k}X_i+\sum_{p\geq 2}\sum_{(i_1,\dots,i_p)\in \mathbb{I}_{2k}^p}\alpha_{(i_1,\dots,i_p)}[X_{i_1},\dots,X_{i_p}].\label{BCH1}
 \end{equation}
 \item For each $(n_1,\dots,n_p)\in\mathbb{I}_N^p$,
 there is a constant $\beta_{(n_1,\dots,n_p)}$ such that for any vectors $X_1,\dots,\allowbreak X_N\in\g$,
\begin{equation}
 X_1\cdots X_N
 =\sum_{n=1}^NX_n+\sum_{p\geq 2}\sum_{(n_1,\dots,n_p)\in\mathbb{I}_N^p}\beta_{(n_1,\dots,n_p)}[X_{n_1},\dots,X_{n_p}].\label{BCH2}
\end{equation}
\item For each $(i_1,\dots,i_p)\in\mathbb{I}_j^{p}$,
there is a constant $\gamma_{(i_1,\dots,i_p)}$ such that for any vectors $X_1,\dots,\allowbreak X_j\in\g$,
\begin{equation}
[X_1,\dots,X_j]_c
=[X_1,\dots,X_j]+\sum_{p\geq j+1}\sum_{(i_1,\dots,i_p)\in\mathbb{I}_j^p}\gamma_{(i_1,\dots,i_p)}[X_{i_1},\dots,X_{i_p}].\label{BCHfinal}
\end{equation}
\end{itemize}
Here we write the commutator of the group by $[x,y]_c=xyx^{-1}y^{-1}$,
and define the map $\psi_n{:}$ $G^n\to G$ by
\begin{gather*}
\psi_2(x_1,x_2)=[x_1,x_2]_c,
\\
\psi_n(x_1,\dots,x_n)=[x_1,\psi_{n-1}(x_2,\dots,x_n)]_c, \qquad i\geq 3.
\end{gather*}
We shortly wrote
\[
[x_1,\dots,x_n]_c:=\psi_n(x_1,\dots,x_n).
\]
Equation (\ref{BCHfinal}) is proved by induction on $j$.

\begin{Remark}\qquad
\begin{itemize}\itemsep=0pt
 \item The constants $\alpha_{(i_1,\dots,i_p)}$,
$\beta_{(n_1,\dots,n_p)}$ and $\gamma_{(i_1,\dots,i_p)}$ depend only on the choice of indices.
\item The commutator $[\cdot,\cdot]_c$ acts on a group $G$,
while the Lie bracket $[\cdot,\cdot]$ acts on its Lie algebra~$\g$.
\end{itemize}
\end{Remark}

{\bf A set of horizontal vectors.}
Let $\{X_{ni}\}_{n=1,\dots,N,i=1,\dots,j}$ be a set of (multi-indexed) horizontal vectors,
and $\mathcal{X}=\left\{\{X_{ni}\}_{n=1,\dots,N,i=1,\dots,j}\mid N,j\in\N\right\}$ a family of those sets.
We always assume $j\leq k$ for $k$-step Carnot groups.
Sometimes we simply write $\{X_{ni}\}$.
Throughout the paper,
a~set of horizontal vectors plays a central role.

From a given set of horizontal vectors,
we introduce the following three notions.

\begin{Definition}
 Let $\{X_{ni}\}_{n=1,\dots,N,\,i=1,\dots,j}$ be a set of horizontal vectors.
\begin{itemize}\itemsep=0pt
 \item[$1.$] Define the group element $y(\{X_{ni}\})$ in $G$ by
 \[
 y(\{X_{ni}\})=\prod_{n=1}^N [X_{n1},\dots,X_{nj}]_c.
 \]

 \item[$2.$] Define the vector $Y(\{X_{ni}\})$ in $\g$ by
 \[
 Y(\{X_{ni}\})=\sum_{n=1}^N[X_{n1},\dots,X_{nj}].
 \]
 \item[$3.$] Define the function $\mathsf{d}_{\mathrm{com}}\colon \mathcal{X}\to\R$ by
 \[
 \mathsf{d}_{\mathrm{com}}(\{X_{ni}\})=\sum_{n=1}^N\sum_{i=1}^j\|X_{ni}\|_1.
 \]
 We call it the combinatorial distance function.
\end{itemize}
\end{Definition}

\begin{Remark}\label{rmk2step}
 For a given set of horizontal vectors $\{X_{ni}\}$,
the group element $y(\{X_{ni}\})$ coincides with the vector $Y(\{X_{ni}\})$ via the exponential map if $G$ is abelian or $2$-step.
\end{Remark}

We denote by $\mathsf{d}_{cc}(X)$ the Carnot--Carath\'eodory distance from the identity element $e$ to a~group element $X\in\g\simeq G$.
The function $\mathsf{d}_{\mathrm{com}}$ gives an upper bound of the Carnot--Carath\'eodory distance $\mathsf{d}_{cc}$ in the following sense.
\begin{Lemma}\label{lemcom}
If a Carnot group $G$ is $k$-step,
then for any set of horizontal vectors $\{X_{ni}\}$,
we have
\[
\mathsf{d}_{cc}(y(\{X_{ni}\}))\leq 2^{k-1}\mathsf{d}_{\mathrm{com}}(\{X_{ni}\}).
\]
\end{Lemma}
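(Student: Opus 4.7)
The plan is to reduce everything to two elementary facts about the Carnot--Carath\'eodory distance: (i) $\mathsf{d}_{cc}$ is left-invariant (hence satisfies the triangle inequality $\mathsf{d}_{cc}(xy)\leq\mathsf{d}_{cc}(x)+\mathsf{d}_{cc}(y)$ and $\mathsf{d}_{cc}(x^{-1})=\mathsf{d}_{cc}(x)$), and (ii) for a single horizontal vector $X\in V_1$, the curve $t\mapsto\exp(tX)$ is horizontal with speed $\|X\|_1$, so $\mathsf{d}_{cc}(\exp(X))\leq\|X\|_1$.

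First I would combine these to bound a single group commutator. From $[x_1,x_2]_c=x_1x_2x_1^{-1}x_2^{-1}$ and the triangle inequality applied four times,
\[
\mathsf{d}_{cc}([x_1,x_2]_c)\leq 2\mathsf{d}_{cc}(x_1)+2\mathsf{d}_{cc}(x_2).
\]
Then by induction on $j$, using the recursive definition $[x_1,\dots,x_j]_c=[x_1,[x_2,\dots,x_j]_c]_c$, I would show
\[
\mathsf{d}_{cc}([X_{n1},\dots,X_{nj}]_c)\leq 2^{j-1}\sum_{i=1}^{j}\|X_{ni}\|_1.
\]
The base case $j=1$ is (ii), and the inductive step applies the two-fold bound above with $\mathsf{d}_{cc}(X_{n1})\leq\|X_{n1}\|_1$ and the inductive hypothesis on $[X_{n2},\dots,X_{nj}]_c$: this yields at worst $2\|X_{n1}\|_1+2\cdot 2^{j-2}\sum_{i=2}^{j}\|X_{ni}\|_1\leq 2^{j-1}\sum_{i=1}^{j}\|X_{ni}\|_1$.

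Finally, one more application of the triangle inequality to the product $y(\{X_{ni}\})=\prod_{n=1}^{N}[X_{n1},\dots,X_{nj}]_c$ gives
\[
\mathsf{d}_{cc}(y(\{X_{ni}\}))\leq\sum_{n=1}^{N}\mathsf{d}_{cc}([X_{n1},\dots,X_{nj}]_c)\leq 2^{j-1}\sum_{n=1}^{N}\sum_{i=1}^{j}\|X_{ni}\|_1=2^{j-1}\mathsf{d}_{\mathrm{com}}(\{X_{ni}\}).
\]
Since $G$ is $k$-step one has $j\leq k$, so $2^{j-1}\leq 2^{k-1}$ and the claim follows.

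There is no real obstacle here; the argument is purely combinatorial once one has the two basic properties (i) and (ii). The only point that requires a small amount of attention is keeping track of the constant in the induction so that each additional level of nesting costs exactly a factor of $2$ rather than something larger, which is why bounding $\mathsf{d}_{cc}(X_{n1})$ directly by $\|X_{n1}\|_1$ (rather than by the induction hypothesis in a symmetric way) is important at the inductive step.
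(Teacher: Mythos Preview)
Your proof is correct and follows essentially the same approach as the paper: split the product by the triangle inequality, bound each nested group commutator $[X_{n1},\dots,X_{nj}]_c$ by expanding it as a word in the $X_{ni}^{\pm1}$, and use $\mathsf{d}_{cc}(X)\leq\|X\|_1$ for horizontal $X$. The only cosmetic difference is that the paper counts the occurrences of each $X_{ni}^{\pm1}$ in the fully expanded word (obtaining $2^i$ copies for $i<j$ and $2^{j-1}$ for $i=j$), whereas you package the same count as a clean induction on $j$ via the two-element bound $\mathsf{d}_{cc}([x_1,x_2]_c)\leq 2\mathsf{d}_{cc}(x_1)+2\mathsf{d}_{cc}(x_2)$; both routes land on the factor $2^{j-1}\leq 2^{k-1}$.
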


\begin{proof}
 By the triangle inequality and the left-invariance of the Carnot--Carath\'eodory distance,
 we have
 \begin{align}
 \mathsf{d}_{cc}(y(\{X_{ni}\}_{n=1,\dots,N,i=1,\dots,j}))&=\mathsf{d}_{cc}
 \Bigg(\prod_{n=1}^N[X_{n1},\dots,X_{nj}]_c\Bigg) \notag \\
 &\leq \sum_{n=1}^N\mathsf{d}_{cc}([X_{n1},\dots,X_{nj}]_c).\label{eqyntri}
 \end{align}
 To compute $[X_{n1},\dots,X_{nj}]_c$,
 we observe the case $j=3$.
By the definition,
\begin{align*}
\begin{split}
 [X_{n1},X_{n2},X_{n3}]_c&=[X_{n1},[X_{n2},X_{n3}]_c]_c
 \\
 &=X_{n1}\cdot[X_{n2},X_{n3}]_c\cdot X_{n1}^{-1}\cdot[X_{n2,X_{n3}}]_c^{-1}
 \\
 &=X_{n1}\cdot \bigl(X_{n2}\cdot X_{n3}\cdot X_{n2}^{-1}\cdot X_{n3}^{-1}\bigr)\cdot X_{n1}^{-1}\cdot \bigl(X_{n3}\cdot X_{n2}\cdot X_{n3}^{-1}\cdot X_{n2}^{-1}\bigr).
 \end{split}
\end{align*}
The final term is the product of two $X_{n1}^{\pm1}$,
four $X_{n2}^{\pm 1}$ and four $X_{n3}^{\pm 1}$.
In the same way,
the group element $[X_{n1},\dots X_{nj}]$ is the product of $X_{n1}^{\pm 1},\dots,X_{nj}^{\pm 1}$.
 For $i=1,\dots,j-1$,
 the group ele\-ment~$X_{ni}^{\pm 1}$ appears $2^i$ times in this product.
 For $i=j$,
 the group element $X_{nj}$ appears $2^{j-1}$ times in this product.
Therefore the triangle inequality shows that
 \begin{gather}\label{eqytri}
 \mathsf{d}_{cc}([X_{n1},\dots,X_{nj}]_c)\leq \sum_{i=1}^{j-1}2^i\mathsf{d}_{cc}(X_{ni})+2^{j-1}\mathsf{d}_{cc}(X_{nj}).
 \end{gather}
Since vectors $X_{ni}$ are horizontal,
we have $\mathsf{d}_{cc}(X_{ni})\leq\|X_{ni}\|_1$.
Moreover,
since $i\leq j\leq k$,
we have
\begin{align*}
 \mathsf{d}_{cc}([X_{n1},\dots,X_{nj}]_c)
 &\leq\sum_{n=1}^N\Bigg[\sum_{i=1}^{j-1}2^i\mathsf{d}_{cc}(X_{ni})+2^{j-1}\mathsf{d}_{cc}(X_{nj})\Bigg]
 \\
& \leq\sum_{n=1}^N\Bigg[\sum_{i=1}^{j-1}2^i\|X_{ni}\|_1+2^{j-1}\|X_{nj}\|_1\Bigg]
\\
& \leq 2^{k-1}\sum_{n=1}^N\sum_{i=1}^j\|X_{ni}\|_1
\\
& = 2^{k-1}\mathsf{d}_{\mathrm{com}}(\{X_{ni}\}).\tag*{\qed}
\end{align*}
\renewcommand{\qed}{}
\end{proof}

\begin{Remark}
 It is difficult to replace $y(\{X_i\})$ with $Y(\{X_i\})$ in Lemma \ref{lemcom}.
 Indeed,
 we use the triangle inequality to prove the inequality (\ref{eqyntri}) and (\ref{eqytri}).
 The triangle inequality is applied to the product a sequence of group elements $X_{ni}$.
 However, the triangle inequality cannot be applied to the Lie algebraic sum.
 This is why we introduce the group element $y(\{X_{ni}\})$.
\end{Remark}

\section{Proof of the main theorem}

\subsection{2-step case}

We start from $2$-step Carnot groups.
Recall that here $\g\simeq G$ via the exponential map,
and each layer $V_i\subset \g$ is equipped with the scalar product $\langle\cdot,\cdot\rangle_i$ defined in the definition of Popp's volume.
Denote by $B^{d_i}(R)$ the ball centered at $0$ of radius $R$ of the inner metric space $(V_i,\langle\cdot,\cdot\rangle_i)$.
Via the identification $\g\simeq G$,
we identify the direct product of balls $B^{d_1}(R_1)\times B^{d_2}(R_2)\subset V_1\oplus V_2=\g$ to a subset in $G$.

\begin{Theorem}\label{propstep2}
 There exists positive constants $\epsilon_1$ and~$\epsilon_2$,
 which depend only on $d_1$,
 such that for any $2$-step Carnot group $(G,V_1,\langle\cdot,\cdot\rangle_1)$ with $d_i=\dim V_i$,
 \[
 B^{d_1}(\epsilon_1)\times B^{d_2}(\epsilon_2)\subset B_{cc}(1).
 \]
 In particular,
 the volume of the unit ball is greater than $\epsilon_1^{d_1}\epsilon_2^{d_2}\omega_{d_1}\omega_{d_2}$.
\end{Theorem}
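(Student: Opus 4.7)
The plan is to show that any point of $B^{d_1}(\epsilon_1)\times B^{d_2}(\epsilon_2)$, viewed in $G$ via the exponential map, has Carnot--Carathéodory distance at most $1$ from the identity. Write such a point as $X_0+Y$ with $X_0\in V_1$, $Y\in V_2$, $\|X_0\|_1\le\epsilon_1$, $\|Y\|_2\le\epsilon_2$. Since $G$ is $2$-step, $V_2$ is central, so the BCH formula collapses to $X_0\cdot Y=X_0+Y$. Thus the triangle inequality and the fact that $X_0$ is a horizontal vector give
\[
\mathsf{d}_{cc}(X_0+Y)\le \mathsf{d}_{cc}(X_0)+\mathsf{d}_{cc}(Y)\le \|X_0\|_1+\mathsf{d}_{cc}(Y)\le \epsilon_1+\mathsf{d}_{cc}(Y).
\]

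The heart of the argument is a bound of the form $\mathsf{d}_{cc}(Y)\le C(d_1)\sqrt{\epsilon_2}$. By the definition of $\langle\cdot,\cdot\rangle_2$ via $\phi_2\colon V_1^{\otimes 2}\to V_2$, there exists $u\in V_1^{\otimes 2}$ with $\phi_2(u)=Y$ and $\|u\|_{\otimes 2}=\|Y\|_2\le\epsilon_2$. Fix an orthonormal basis $\{e_1,\dots,e_{d_1}\}$ of $(V_1,\langle\cdot,\cdot\rangle_1)$ and expand $u=\sum_{i,j}u_{ij}\,e_i\otimes e_j$, so $\sum_{i,j}u_{ij}^2=\|u\|_{\otimes 2}^2$. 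Rebalance each simple tensor by setting $X_{(ij),1}=\sqrt{|u_{ij}|}\,\mathrm{sgn}(u_{ij})\,e_i$ and $X_{(ij),2}=\sqrt{|u_{ij}|}\,e_j$, so that $[X_{(ij),1},X_{(ij),2}]=u_{ij}[e_i,e_j]$ and both factors have horizontal norm $\sqrt{|u_{ij}|}$. Then
\[
Y=\sum_{i,j}[X_{(ij),1},X_{(ij),2}]=Y(\{X_{(ij),n}\}),
\]
and by Remark~\ref{rmk2step} this equals $y(\{X_{(ij),n}\})$ in $G$.

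Now I apply Lemma~\ref{lemcom} with $k=2$:
\[
\mathsf{d}_{cc}(Y)\le 2\,\mathsf{d}_{\mathrm{com}}(\{X_{(ij),n}\})=2\sum_{i,j}2\sqrt{|u_{ij}|}=4\sum_{i,j}\sqrt{|u_{ij}|}.
\]
Two applications of Cauchy--Schwarz on the index set $\{1,\dots,d_1\}^2$ give $\sum_{i,j}\sqrt{|u_{ij}|}\le d_1\bigl(\sum_{i,j}|u_{ij}|\bigr)^{1/2}\le d_1^{3/2}\bigl(\sum_{i,j}u_{ij}^2\bigr)^{1/4}=d_1^{3/2}\|Y\|_2^{1/2}$. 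Hence $\mathsf{d}_{cc}(Y)\le 4d_1^{3/2}\sqrt{\epsilon_2}$, and
\[
\mathsf{d}_{cc}(X_0+Y)\le \epsilon_1+4d_1^{3/2}\sqrt{\epsilon_2}.
\]
Choosing, for instance, $\epsilon_1=\tfrac12$ and $\epsilon_2=(16d_1^{3/2})^{-2}$ (both depending only on $d_1$) makes this sum $\le 1$, proving the inclusion. For the volume statement, note that $\g=V_1\oplus V_2$ is an orthogonal decomposition with respect to $\langle\cdot,\cdot\rangle_\g$, so Popp's volume on $\g\simeq G$ is the product of the Euclidean volumes on $(V_1,\langle\cdot,\cdot\rangle_1)$ and $(V_2,\langle\cdot,\cdot\rangle_2)$, which gives $\epsilon_1^{d_1}\epsilon_2^{d_2}\omega_{d_1}\omega_{d_2}$.

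The main obstacle is the quantitative control in step three: the Popp norm on $V_2$ is defined by a non-constructive minimum, so to turn a norm bound on $u\in V_1^{\otimes 2}$ into a bound on a sum $\sum(\|X_{n1}\|_1+\|X_{n2}\|_1)$ that is amenable to Lemma~\ref{lemcom}, one must both rebalance each simple tensor so the two factors carry equal weight $\sqrt{|u_{ij}|}$ (which is what eventually yields the $\sqrt{\epsilon_2}$ scaling compatible with the homogeneity of $\mathsf{d}_{cc}$ on $V_2$) and pay a dimensional price in $d_1$ via Cauchy--Schwarz. The square-root dependence on $\epsilon_2$ is exactly the reason the constants $\epsilon_1,\epsilon_2$ can be chosen independently and the constants depend only on $d_1$, as advertised in Remark~\ref{rmk11}.
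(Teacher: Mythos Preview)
Your proof is correct and follows essentially the same route as the paper's: both pick a minimizing preimage $u\in V_1^{\otimes 2}$, expand it in an orthonormal basis, rebalance each simple tensor so the two factors have equal norm $\sqrt{|u_{ij}|}$, invoke Remark~\ref{rmk2step} and Lemma~\ref{lemcom}, and then use H\"older/Cauchy--Schwarz over the $d_1^2$ indices to get $\mathsf{d}_{cc}(Y)\le 4d_1^{3/2}\sqrt{\|Y\|_2}$. The only cosmetic difference is your choice $\epsilon_2=(16d_1^{3/2})^{-2}=1/(256d_1^3)$ versus the paper's $\epsilon_2=1/(64d_1^3)$; both make the total $\le 1$.
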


\begin{proof}
Let $Z_2$ be a given vector in $V_2$,
and put $\nu=\|Z_2\|_2$.

 Consider $\{X_1,\dots,X_{d_1}\}$ an orthonormal (for $\langle\cdot,\cdot\rangle_1$) basis of $V_1$.
 Choose $u\in \phi_2^{-1}(Z_2)\subset V_1\otimes V_1$ so that $\nu=\|Z_2\|_2=\|u\|_{\otimes 2}$,
 where $\|\cdot\|_{\otimes 2}$ is the norm on $V_1\otimes V_1$ induced by $\langle\cdot,\cdot\rangle_{\otimes 2}$.
 By definition $\nu=\|Z_2\|_2=\|u\|_{\otimes 2}$,
 with
 \[
 u=\sum_{s,t=1}^{d_1}\alpha_{\mathrm{st}}X_s\otimes X_t=\sum_{s,t=1}^{d_1}\big(\alpha_{\mathrm{st}}^1X_s\big)\otimes\big(\alpha_{\mathrm{st}}^2 X_t\big)=:\sum_{n=1}^{d_1^2}X_{n1}\otimes X_{n2},
 \]
 where $\alpha_{\mathrm{st}}^1$ and $\alpha_{\mathrm{st}}^2$ are chosen in such a way that $\big\|\alpha_{\mathrm{st}}^1X_s\big\|_1=\big\|\alpha_{\mathrm{st}}^2X_t\big\|_1$,
 and where we rename the multi-index $(s_1,s_2)\in\{1,\dots,d_1\}^2$ the index $n\in\big\{1,\dots,d_1^2\big\}$.
 In this way, we obtain a set of horizontal vectors $\{X_{ni}\}_{n=1,\dots,d_1^2,i=1,2}$ such that the following three properties hold:
\begin{gather}
 Z_2=\sum_{n=1}^{d_1^2}[X_{n1},X_{n2}],\label{assumption1of2}
 \\
 \nu=\sqrt{\sum_{n,m=1}^{d_1^2}\langle X_{n1},X_{m1}\rangle_1\langle X_{n2},X_{m2}\rangle_1}=\sqrt{\sum_{n=1}^{d_1^2}\|X_{n1}\|_1^2\|X_{n2}\|_1^2},
 \label{assumption2of2}
 \\
\|X_{n1}\|_1=\|X_{n2}\|_1\qquad\text{for all} \quad n=1,\dots,d_1^2.\label{assumption3of2}
\end{gather}
Indeed,
equation (\ref{assumption1of2}) holds since $u\!\in\! \phi_2^{-1}(Z_2)$.
Since $\{X_1,\dots,X_{d_1}\}$ is orthonormal in $(V_1,\langle\cdot,\cdot\rangle_1)$,
$\{X_{n1}\otimes X_{n2}\}_{n=1,\dots,d_1^2}$ is an orthogonal subset in $(V_1\otimes V_1,\langle\cdot,\cdot\rangle_{\otimes 2})$,
which implies (\ref{assumption2of2}).
Equa\-tion~(\ref{assumption3of2}) holds by the choice of $\alpha_{\mathrm{st}}^1$ and $\alpha_{\mathrm{st}}^2$.

By equation (\ref{assumption3of2}),
\begin{gather*}
\mathsf{d}_{\mathrm{com}}(\{X_{ni}\})=\sum_{n=1}^{d_1^2}\|X_{n1}\|_1+\|X_{n2}\|_1 =2\sum_{n=1}^{d_1^2}\|X_{n1}\|_1,
\\
\nu=\sqrt{\sum_{n=1}^{d_1^2}\|X_{n1}\|_1^2\|X_{n2}\|_1^2}=\sqrt{\sum_{n=1}^{d_1^2}\|X_{n1}\|_1^4}.
\end{gather*}
Combined with the above two equalities and H\"older's inequality,
we obtain an upper bound of $\mathsf{d}_{\mathrm{com}}(\{X_{ni}\})$ by
\begin{equation}\label{eq2step2}
 \mathsf{d}_{\mathrm{com}}(\{X_{ni}\})= 2\sum_{n=1}^{d_1^2}\|X_{n1}\|_1\leq 2d_1^{\frac{3}{2}}\sqrt[4]{\sum_{n=1}^{d_1^2}\|X_{n1}\|_1^4}=2d_1^{\frac{3}{2}}\sqrt{\nu}.
\end{equation}

Set $\epsilon_1=\frac{1}{2}$ and $\epsilon_2=\frac{1}{64d_1^3}$.
We show that if a vector $Z=Z_1+Z_2$ $(Z_i\in V_i)$ satisfies $\|Z_i\|_i\leq \epsilon_i$,
then $\mathsf{d}_{cc}(Z)\leq 1$.

Note that $Z_1\cdot Z_2=Z_1+Z_2$ since $Z_2$ is in the center of $G$.
As we see in Remark \ref{rmk2step},
$Z_2=Y(\{X_{ni}\})=y\left(\{X_{ni}\}\right)$.
Hence by Lemma \ref{lemcom} and the inequality (\ref{eq2step2}),
\begin{align*}
 \mathsf{d}_{cc}(Z)&=\mathsf{d}_{cc}(Z_1\cdot Z_2)
=\mathsf{d}_{cc}(Z_1\cdot Z_2)
\\
 &\leq \mathsf{d}_{cc}(Z_1)+\mathsf{d}_{cc}(Z_2)
\leq \|Z_1\|_1+2\mathsf{d}_{\mathrm{com}}(\{X_{ni}\})
 \leq \frac{1}{2}+\frac{1}{2}=1.
\end{align*}

By using the volume of the unit ball in the $d$-dimensional Euclidean space $\omega_d$,
we have
\[
\operatorname{vol}(B_{cc}(1))\geq \operatorname{vol}\big(B^{d_1}(\epsilon_1)\times B^{d_2}(\epsilon_2)\big)=\frac{1}{2^{d_1+8d_2}}\frac{1}{d_1^{3d_2}}\,\omega_{d_1}\omega_{d_2}.\tag*{\qed}
\]
\renewcommand{\qed}{}
\end{proof}

\subsection{Higher step cases}
For a $k$-step Carnot group,
the following theorem implies the
 main Theorem~\ref{thmmain}.
\begin{Theorem}\label{propmain}
 There exist positive constants $\epsilon_1,\dots,\epsilon_k$,
 which depend only on $d_1$ and $k$,
 such that for any $k$-step Carnot group $(G,V_1,\langle\cdot,\cdot\rangle_1)$ with $d_i=\dim V_i$,
 \[\prod_{i=1}^kB^{d_i}(\epsilon_i)\subset B_{cc}(1).\]
 In particular,
 the volume of the unit ball is greater than $\prod_{i=1}^k\epsilon_i^{d_i}\omega_{d_i}$.
\end{Theorem}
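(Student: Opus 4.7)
The plan is to prove Theorem \ref{propmain} by induction on the step $k$, the base case $k = 2$ being exactly Theorem \ref{propstep2}. I would strengthen the inductive statement so that it produces, for every $Z$ in the product of balls, not just the inclusion into $B_{cc}(1)$ but an explicit representation of $Z$ as a product of horizontal-vector commutators $y(\{X_{nj}\})$ with total combinatorial distance $\mathsf{d}_{\mathrm{com}}$ bounded by a constant depending only on $d_1$ and $k$; Lemma \ref{lemcom} then converts this into $\mathsf{d}_{cc}(Z) \leq 1$.

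For the inductive step, I would pass to the quotient $G/\exp(V_k)$, which is a $(k-1)$-step Carnot group with the same horizontal layer $V_1$ and scalar product (and with the induced scalar products on $V_2, \dots, V_{k-1}$ coinciding with those of $G$, since the defining surjections $\phi_i$ agree for $i \leq k-1$). Given $Z = Z_1 + \cdots + Z_k$ with $\|Z_i\|_i \leq \epsilon_i$, the strengthened inductive hypothesis applied in this quotient yields horizontal-vector families whose product $\tilde{y}$ equals $Z_1 + \cdots + Z_{k-1}$ modulo $V_k$; lifted back to $G$, the same formula becomes $\tilde{y} = Z_1 + \cdots + Z_{k-1} + W$ for some residue $W \in V_k$. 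Since $V_k$ is central in a $k$-step group, one has $Z = \tilde{y} \cdot (Z_k - W)$, so it remains to realise the single element $Z_k - W \in V_k$.

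To this end I would adapt the 2-step construction to the top layer. Choose a norm-minimising preimage $u \in V_1^{\otimes k}$ of $Z_k - W$ under $\phi_k$, expand $u$ in an orthonormal basis of $V_1$, and redistribute scalar factors across the $k$ tensor slots exactly as in the proof of Theorem \ref{propstep2}, producing horizontal vectors $\{X_{nj}^{(k)}\}$ with $Y(\{X_{nj}^{(k)}\}) = Z_k - W$, equal norms across the $k$ slots, and, by Hölder's inequality,
\[ \mathsf{d}_{\mathrm{com}}(\{X_{nj}^{(k)}\}) \leq c(d_1, k)\,\|Z_k - W\|_k^{1/k}. \]
The crucial gain at the top layer is that in a $k$-step group the formulas (\ref{BCH2}) and (\ref{BCHfinal}) truncate at bracket length $k$, so $y(\{X_{nj}^{(k)}\})$ equals $Y(\{X_{nj}^{(k)}\}) = Z_k - W$ \emph{exactly} in $G$, with no higher-order correction. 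Combining this with $\tilde{y}$ completes the realisation of $Z$, and summing the two combinatorial-distance bounds and applying Lemma \ref{lemcom} yields $\mathsf{d}_{cc}(Z) \leq 1$.

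The main obstacle is controlling $\|W\|_k$. By (\ref{BCH1}), (\ref{BCH2}) and (\ref{BCHfinal}), $W$ is a sum of length-$k$ brackets in the horizontal vectors appearing in the inductive construction of $\tilde{y}$; its $\|\cdot\|_k$-norm, estimated through the defining surjection $\phi_k \colon V_1^{\otimes k} \to V_k$ (which is norm-decreasing by construction of the scalar product on $V_k$), is therefore bounded by the $k$-th power of the inductive combinatorial distance, hence by a polynomial in $\epsilon_1, \dots, \epsilon_{k-1}$. Choosing $\epsilon_k$ small enough, depending only on $d_1$ and $k$, so that $c(d_1, k)\bigl(\epsilon_k + \|W\|_k\bigr)^{1/k}$ fits within the remaining budget, closes the induction. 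The delicate part is the bookkeeping of which brackets land in $V_k$ and the quantitative estimate $\|[X_1, \dots, X_k]\|_k \leq \|X_1\|_1 \cdots \|X_k\|_1$, together with strengthening the inductive hypothesis to furnish the explicit combinatorial realisation required by the lifting step.
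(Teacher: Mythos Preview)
Your proposal follows essentially the same route as the paper: induction on $k$, passage to the $(k-1)$-step quotient $G/\exp(V_k)$, lifting to obtain a residue $W\in V_k$ (this is the paper's error vector $B_k^{(k-1)}$), and correction at the central top layer with an adjusted family, using that $y=Y$ exactly there since all higher brackets vanish. Your direct bound $\|[X_1,\dots,X_k]\|_k\leq\|X_1\|_1\cdots\|X_k\|_1$ via the norm-decreasing surjection $\phi_k$ is a clean substitute for the paper's more elaborate recursion through Lemmas~\ref{lemsubmulti} and~\ref{lempolynomial}.

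There is, however, one genuine gap in how you close the induction. You assert that $\|W\|_k$ is bounded by the $k$-th power of the inductive combinatorial distance, ``hence by a polynomial in $\epsilon_1,\dots,\epsilon_{k-1}$'', and then that choosing $\epsilon_k$ small suffices. But your strengthened inductive hypothesis only bounds that combinatorial distance by a \emph{fixed constant} $C(d_1,k-1)$, independent of the actual $\|Z_j\|_j$; so your estimate gives only $\|W\|_k\leq C'(d_1,k)$, a constant, and shrinking $\epsilon_k$ alone cannot make $(\epsilon_k+\|W\|_k)^{1/k}$ fit into any prescribed budget. The passage from ``$k$-th power of the combinatorial distance'' to ``polynomial in the $\epsilon_j$ vanishing at $0$'' is exactly where an extra ingredient is needed. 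The paper supplies it via dilation: once the inductive families $\bigl\{X^{(j)}_{ni}\bigr\}$ are obtained in the $(k-1)$-step quotient for the radii $\tilde\epsilon_j$, one replaces them by $\bigl\{tX^{(j)}_{ni}\bigr\}$, which are adjusted to $\sum_j t^jZ_j$ with combinatorial distance $\leq t/2^{k-2}$; then $\|W\|_k$ is bounded by a polynomial in $t$ vanishing at $0$, and one finally sets $\epsilon_j=T^j\tilde\epsilon_j$ for a suitably small $T=T(d_1,k)$. You need either this rescaling step, or else a sharper inductive hypothesis that records how $\mathsf{d}_{\mathrm{com}}$ scales with the $\nu_j^{1/j}$ rather than merely bounding it by a constant.
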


Compared from the $2$-step case,
its difficulty is the non-coincidence of $y(\{X_{ni}\})$ and $Y(\{X_{ni}\})$.
We will compute its difference by using the BCH formula.

Let $Z_j$ be a given vector in $V_j$,
and put $\nu_j=\|Z_j\|_j$.
 Consider $\{X_1,\dots,X_{d_1}\}$ an orthonormal (for $\langle\cdot,\cdot\rangle_1$) basis of $V_1$.
 Choose $u_j\in \phi_j^{-1}(Z_2)\subset V_1^{\otimes j}$ so that $\nu_j=\|Z_j\|_j=\|u_j\|_{\otimes j}$,
 where $\|\cdot\|_{\otimes j}$ is the norm on $V_1^{\otimes j}$ induced by $\langle\cdot,\cdot,\rangle_{\otimes j}$.
 By definition $\nu_j=\|Z_j\|_j=\|u_j\|_{\otimes j}$,
 with
 \begin{align*}
 u_j=\sum_{s_1,\dots,s_j=1}^{d_1}\alpha_{s_1\dots s_j}X_{s_1}\otimes\cdots\otimes X_{s_j}
 &=\sum_{s_1,\dots,s_j=1}^{d_1}\big(\alpha_{s_1\dots s_j}^1X_{s_1}\big)\otimes\cdots\otimes \big(\alpha_{s_1\dots s_j}^j X_{s_j}\big)
 \\
 &=:\sum_{n=1}^{d_1^j}X_{n1}\otimes \cdots\otimes X_{nj},
 \end{align*}
 where $\alpha_{s_1\dots s_j}^i$, $i=1,\dots,j$, are chosen in such a way that $\big\|\alpha_{s_1\dots s_j}^1X_{s_1}\big\|_1=\cdots =\big\|\alpha_{s_1\dots s_j}^jX_{s_j}\big\|_1$,
 and where we rename the multi-index $(s_1,\dots,s_j)\in\{1,\dots,d_1\}^j$ the index $n\in\big\{1,\dots,d_1^j\big\}$.
 In~this way,
we obtain a set of horizontal vectors $\{X_{ni}\}_{n=1,\dots,d_1^j,\,i=1,\dots,j}$ such that the following three properties holds:
 \begin{gather}
Z_j=\sum_{n=1}^{d_1^j}[X_{n1},\dots,X_{nj}],\label{xni1}
 \\
\nu_j=\sqrt{\sum_{n,m=1}^{d_1^j}\langle X_{n1},X_{m1}\rangle_1\cdots\langle X_{nj},Y_{nj}\rangle_1}=\sqrt{\sum_{n=1}^{d_1^j}\|X_{n1}\|_1^2\cdots\|X_{nj}\|_1^2},\label{xni2}
 \\
\|X_{n1}\|_1=\cdots=\|X_{nj}\|_1\qquad \text{for all}\quad n=1,\dots,d_1^j.\label{xni3}
\end{gather}

\begin{Definition}
 We say that a set of horizontal vectors $\{X_{ni}\}$ is \textit{adjusted to $Z_j$} if the three conditions (\ref{xni1}),
 (\ref{xni2}) and (\ref{xni3}) hold.
\end{Definition}

Let us consider the difference between the group element $y(\{X_{ni}\})$ and the vector $Y(\{X_{ni}\})$ of a set of horizontal vectors adjusted to $Z_j$.
Define the map $P_l\colon \g\to V_l$ to be the linear projection.
By the equation (\ref{BCHfinal}),
\[
 P_l(y(\{X_{ni}\}))=\begin{cases}
 0, & l\leq j-1,\\
Z_j=Y(\{X_{ni}\}), & l=j,\\
\text{a possibly non-zero vector}, & l\geq j+1.
\end{cases}
\]
We label a possibly non-zero vector $P_l(y(\{X_{ni}\}))$, $l\geq j+1$, as follows.

\begin{Definition}
 Denote by $A_l(\{X_{ni}\})$ the image of $y(\{X_{ni}\})$ by $P_l$.
 We call the vec\-tor~$A_l(\{X_{ni}\})$ the $l$-error vector of $\{X_{ni}\}$.
\end{Definition}
Sometimes we simply write $A_l$.
We will give an upper bound of $\|A_l\|_l$ later in Lemma \ref{lemvjbound}.

Let us start from the preparation.
\begin{Lemma}\label{lemsubmulti}
 For any two vectors $Z_p\in V_p$ and $Z_q\in V_q$,
 we have
 \[\|[Z_p,Z_q]\|_{p+q}\leq 2^{p\wedge q}\|Z_p\|_p\|Z_q\|_q,\]
 where $p\wedge q=\min\{p,q\}$.
\end{Lemma}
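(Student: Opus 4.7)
The plan is to construct an explicit tensor $\tilde u \in V_1^{\otimes(p+q)}$ such that $\phi_{p+q}(\tilde u) = [Z_p, Z_q]$ and $\|\tilde u\|_{\otimes(p+q)} \leq 2^{p \wedge q}\|Z_p\|_p\|Z_q\|_q$. Once this is done, the minimum-norm definition of the scalar product on $V_{p+q}$ immediately gives the lemma.

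First I would pick decompositions realizing the minima: write $Z_p = \phi_p(u_p)$ and $Z_q = \phi_q(u_q)$ with $u_p = \sum_s X_{s,1}\otimes\cdots\otimes X_{s,p}$ and $u_q = \sum_t Y_{t,1}\otimes\cdots\otimes Y_{t,q}$, so that $\|u_p\|_{\otimes p} = \|Z_p\|_p$ and $\|u_q\|_{\otimes q} = \|Z_q\|_q$. By bilinearity of the Lie bracket,
\[
[Z_p, Z_q] = \sum_{s,t}[[X_{s,1},\ldots,X_{s,p}], [Y_{t,1},\ldots,Y_{t,q}]],
\]
and by antisymmetry of the bracket I may assume without loss of generality that $p \leq q$.

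The central step is a Jacobi expansion identity, to be proved by induction on $p$:
\[
[[A_1,\ldots,A_p], C] = \sum_\sigma \varepsilon_\sigma [A_{\sigma(1)}, A_{\sigma(2)}, \ldots, A_{\sigma(p)}, C],
\]
valid in any Lie algebra, where the sum has exactly $2^{p-1}$ signed terms and the brackets on the right are left-nested. The base case $p = 2$ is the Jacobi identity. For the inductive step, write $[A_1,\ldots,A_p] = [A_1, [A_2,\ldots,A_p]]$ and apply Jacobi to obtain $[[A_1,\ldots,A_p], C] = [A_1, [[A_2,\ldots,A_p], C]] - [[A_2,\ldots,A_p], [A_1, C]]$, then invoke the inductive hypothesis on each shorter bracket. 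Applying this identity with $C = [Y_{t,1},\ldots,Y_{t,q}]$ turns each summand into a left-nested bracket of length $p+q$; summing over $s, t$ produces
\[
\tilde u = \sum_\sigma \varepsilon_\sigma \, (P_\sigma u_p)\otimes u_q \in V_1^{\otimes p}\otimes V_1^{\otimes q} \cong V_1^{\otimes(p+q)},
\]
where $P_\sigma$ permutes the tensor factors according to $\sigma$, and $\phi_{p+q}(\tilde u) = [Z_p, Z_q]$ by construction.

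Finally, since $P_\sigma$ permutes an orthonormal basis of $V_1^{\otimes p}$ it is an isometry, and the tensor norm is multiplicative across the factorization $V_1^{\otimes p}\otimes V_1^{\otimes q}$. Hence the triangle inequality gives
\[
\|\tilde u\|_{\otimes(p+q)} \leq 2^{p-1}\|u_p\|_{\otimes p}\|u_q\|_{\otimes q} = 2^{p-1}\|Z_p\|_p\|Z_q\|_q \leq 2^{p\wedge q}\|Z_p\|_p\|Z_q\|_q.
\]
The main obstacle is the Jacobi expansion: one must verify that each of the $2^{p-1}$ terms is genuinely a left-nested bracket whose first $p$ leaves are a permutation of the $A_i$ and whose last $q$ leaves are $Y_{t,1},\ldots,Y_{t,q}$ in order, so that the preimage tensor under $\phi_{p+q}$ can be read off unambiguously as $(P_\sigma u_p)\otimes u_q$. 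The remaining estimate is a routine application of the properties of the tensor norm.
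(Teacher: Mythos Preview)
Your argument is correct and shares the paper's central step: the iterated Jacobi identity that rewrites $[[A_1,\ldots,A_p],C]$ as a signed sum of left-nested brackets. The paper describes the index set explicitly as the set of unimodal permutations of $\{1,\ldots,p\}$ (those increasing up to the position of $p$ and decreasing after) and bounds its size by $2^p$; your inductive count of $2^{p-1}$ is the sharp value, which is why you end up a factor of $2$ below the stated constant.

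Where the two proofs differ is in the norm estimate. The paper first passes to the explicit ``adjusted'' decompositions of $Z_p$ and $Z_q$ coming from an orthonormal basis of $V_1$, bounds each individual nested bracket $\|[X^{(p)}_{n\sigma(1)},\ldots,X^{(q)}_{mq}]\|_{p+q}$ by the product $\prod_i\|X^{(p)}_{ni}\|_1\prod_j\|X^{(q)}_{mj}\|_1$, and then sums over all $n,m,\sigma$. You instead stay at the tensor level: you assemble a single preimage $\tilde u=\sum_\sigma \varepsilon_\sigma (P_\sigma u_p)\otimes u_q\in V_1^{\otimes(p+q)}$ and bound $\|\tilde u\|_{\otimes(p+q)}$ directly, using only that each $P_\sigma$ is an isometry of $V_1^{\otimes p}$ and that the tensor norm factors as $\|a\otimes b\|_{\otimes(p+q)}=\|a\|_{\otimes p}\|b\|_{\otimes q}$. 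This is cleaner and more robust: it needs no particular choice of decomposition of $u_p,u_q$ into simple tensors, and the final inequality $\|[Z_p,Z_q]\|_{p+q}\le\|\tilde u\|_{\otimes(p+q)}$ follows immediately from the quotient definition of $\|\cdot\|_{p+q}$. The paper's route, on the other hand, reuses the ``adjusted'' machinery that is already in place for the rest of Section~3, so in context its explicit expansions are not extra overhead.
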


\begin{proof}
 {\sloppy By the skew-symmetry of the Lie bracket,
 we can assume $p\leq q$.
 Let $\bigl\{X_{ni}^{(p)}\bigr\}$ $\bigl(\text{resp.}~\bigl\{X_{mj}^{(q)}\bigr\}\bigr)$ be a set of horizontal vectors adjusted to $Z_p$ $(\text{resp.}~Z_q)$.
From the bi-linearity of the Lie bracket and the subadditivity of the norm $\|\cdot\|_{p+q}$,
we have
\begin{align}
 \|[Z_p,Z_q]\|_{p+q}=\bigg\|\bigg[\sum_{n=1}^{d_1^p}\big[X_{n1}^{(p)},\dots,X_{np}^{(p)}\big], \sum_{m=1}^{d_1^q}\big[X_{m1}^{(q)},\dots,X_{mq}^{(q)}\big]\bigg]\bigg\|_{p+q} \notag \\
 \leq\sum_{n=1}^{d_1^p}\sum_{m=1}^{d_1^q}\big\|\big[\big[X_{n1}^{(p)},\dots,X_{np}^{(p)}\big], \big[X_{m1}^{(q)},\dots,X_{mq}^{(q)}\big]\big]\big\|_{p+q}.\label{symm}
\end{align}}\noindent
By applying the Jacobi identity $[[X,Y],Z]=[X,[Y,Z]]-[Y,[X,Z]]$ several times,
we can rewrite
\begin{gather*}
\bigl[\big[X_{n1}^{(p)},\dots,X_{np}^{(p)}\big],\big[X_{m1}^{(q)},\dots,X_{mq}^{(q)}\big]\bigr]
 \\ \qquad
{} =\sum_{\sigma\in S}\epsilon_{\sigma}\big[X_{n\sigma(1)}^{(p)},\big[X_{n\sigma(2)}^{(p)},\dots, \big[X_{n\sigma(p)}^{(p)},\big[X_{m1}^{(q)},\dots,X_{mq}^{(q)}\big]\big],\ldots\big]\big],
\end{gather*}
where $S$ is the subset of the symmetric group of degree $p$ consisting of $\sigma$ such that if $\sigma(a)=p$,
then
\[
\sigma(1)<\sigma(2)<\cdots<\sigma(a)>\sigma(a+1)>\cdots>\sigma(p),
\]
and $\epsilon_{\sigma}\in\{\pm1\}$.
Since the size of the subset $S$ is $2^p$,
we can compute an upper bound of (\ref{symm}) by
\begin{gather*}
 \sum_{n=1}^{d_1^p}\sum_{m=1}^{d_1^q}\bigg\|\sum_{\sigma\in S}\epsilon_{\sigma}\big[X_{n\sigma(1)}^{(p)},\big[X_{n\sigma(2)}^{(p)},\dots,\big[X_{n\sigma(p)}^{(p)}, \big[X_{m1}^{(q)},\dots,X_{mq}^{(q)}\big]\big],\ldots\big]\big]\bigg\|_{p+q}
 \\ \qquad
{}\leq\sum_{n=1}^{d_1^p}\sum_{m=1}^{d_1^q}2^p\prod_{i=1}^p\big\|X_{ni}^{(p)}\big\|_1\prod_{j=1}^q\big\|X_{mj}^{(q)}\big\|_1
 =2^p\Bigg(\sum_{n=1}^{d_1^p}\prod_{i=1}^p\big\|X_{ni}^{(p)}\big\|_1\Bigg) \Bigg(\sum_{m=1}^{d_1^q}\prod_{j=1}^q\big\|X_{mj}^{(q)}\big\|_1\Bigg)
 \\ \qquad
 {}\leq2^p\|Z_p\|_p\|Z_q\|_q.\tag*{\qed}
\end{gather*}
\renewcommand{\qed}{}
\end{proof}

Next we will control an upper bound of the $\|A_l(\{X_{ni}\})\|_l$.

\begin{Lemma}\label{lemvjbound}
 For $j=1,\dots,k$,
 there is a positive constant $\theta_j=\theta_j(d_1,j,k)$ such that for any vector $Z_j\in V_j$, $\nu_j=\|Z_j\|_j$, and any set of horizontal vectors $\{X_{ni}\}$ adjusted to $Z_j$,
 \[\|A_l(\{X_{ni}\})\|_l\leq \theta_j\nu_j^{\frac{l}{j}}.\]
\end{Lemma}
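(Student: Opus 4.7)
The plan is to expand $y(\{X_{ni}\})$ via the BCH formulae into a finite sum of iterated Lie brackets of the horizontal vectors $X_{ni}$, project the expansion onto $V_l$, and bound the result by combining a multilinear bracket estimate with H\"older's inequality in the index $n$.

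First, I would apply (\ref{BCHfinal}) to each commutator to write, as an element of $\g$,
\[
[X_{n1},\dots,X_{nj}]_c = [X_{n1},\dots,X_{nj}] + \sum_{p=j+1}^{k}\sum_{(i_1,\dots,i_p)\in\mathbb{I}_j^p}\gamma_{(i_1,\dots,i_p)}[X_{ni_1},\dots,X_{ni_p}],
\]
the sum truncating at $p=k$ because $G$ is $k$-step. Setting $y_n:=[X_{n1},\dots,X_{nj}]_c$ and applying (\ref{BCH2}) to the product $y(\{X_{ni}\})=y_1\cdots y_N$ with $N=d_1^j$ expresses $y(\{X_{ni}\})$ as $\sum_n y_n$ plus a linear combination of iterated Lie brackets $[y_{n_1},\dots,y_{n_p}]$ with $p\leq k/j$. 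Expanding each $y_{n_a}$ by multilinearity, the $V_l$-projection $A_l$ becomes a finite linear combination of length-$l$ Lie brackets of the $X_{ni}$, with universal coefficients and a combinatorial structure depending only on $d_1,j,k$.

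Next I would bound each contributing bracket. By the iterated Jacobi argument used in the proof of Lemma~\ref{lemsubmulti}, any length-$l$ bracket of horizontal vectors $Y_1,\dots,Y_l\in V_1$ satisfies $\|[Y_{\sigma(1)},\dots,Y_{\sigma(l)}]\|_l \leq C_l \prod_{a=1}^l \|Y_a\|_1$ for a constant $C_l$ depending only on $l$ (and hence on $k$). The adjustment condition (\ref{xni3}) gives $\|X_{ni}\|_1=r_n:=\|X_{n1}\|_1$, so a length-$l$ bracket in $A_l$ arising from the slot pattern $(n_1,b_1),\dots,(n_p,b_p)$, with $b_a\geq j$ horizontal inputs drawn from $\{X_{n_a,\cdot}\}$ and $\sum_a b_a=l$, is bounded by a constant times $\prod_a r_{n_a}^{b_a}$. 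Summing over $(n_1,\dots,n_p)\in\{1,\dots,N\}^p$ yields $\prod_a \sum_n r_n^{b_a}$. Using the identity $\nu_j^2=\sum_n r_n^{2j}$ that follows from (\ref{xni2}) and (\ref{xni3}), each factor satisfies
\[
\sum_n r_n^{b_a} \leq d_1^{\max(0,\,j-b_a/2)}\,\nu_j^{b_a/j},
\]
obtained directly when $b_a\geq 2j$ from the pointwise bound $r_n\leq \nu_j^{1/j}$, and from H\"older's inequality with $N=d_1^j$ terms when $j\leq b_a<2j$. Multiplying these factors, the exponents of $\nu_j$ add to $l/j$, so the whole contribution is at most a constant depending only on $d_1,j,k$ times $\nu_j^{l/j}$, which I would take as $\theta_j$.

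The main obstacle is bookkeeping: one must verify that, once the $V_l$-component is extracted, only finitely many bracket shapes survive, so that the combinatorial constants (the number of multi-indices, the coefficients $\gamma,\beta$, and the Jacobi constants $C_l$) depend on $d_1,j,k$ alone and not on $N$ or on the specific $Z_j$. This is guaranteed by $k$-step nilpotency, which truncates both BCH sums at $p\leq k$ and forces $p\leq k/j$ in the outer expansion, while all the $N$-dependence is absorbed into the H\"older step, leaving the clean exponent $l/j$ on $\nu_j$.
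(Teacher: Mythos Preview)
Your argument is correct and follows essentially the same two-level BCH expansion as the paper: write each $U_n=[X_{n1},\dots,X_{nj}]_c$ via (\ref{BCHfinal}), combine the $U_n$ via (\ref{BCH2}), project to $V_l$, and bound the resulting iterated brackets using the submultiplicativity underlying Lemma~\ref{lemsubmulti}. The only difference is cosmetic: the paper bounds $\|P_m(U_n)\|_m\leq\tilde\gamma\,\nu_j^{m/j}$ directly from the pointwise estimate $\|X_{n1}\|_1\leq\nu_j^{1/j}$ (immediate from (\ref{xni2})--(\ref{xni3})) and then sums crudely over the $d_1^j$ indices, whereas you expand all the way down to horizontal brackets and apply H\"older to $\sum_n r_n^{b_a}$---a slightly sharper but structurally identical estimate.
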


\begin{proof}
For each $n=1,\dots,d_1^j$,
let $U_n=y(\{X_{ni}\}_{i=1,\dots,j})$.
By the equation (\ref{BCHfinal}),
 \begin{equation}\label{equn}
 U_n=[X_{n1},\dots,X_{nj}]+\sum_{m\geq j+1}\sum_{(i_1,\dots,i_m)\in\mathbb{I}_j^m}\gamma_{(i_1,\dots,i_m)}[X_{ni_1},\dots,X_{ni_m}].
 \end{equation}

By using the equation (\ref{BCH2}),
the product of the group elements $U_n$ is written by
\[
 \prod_{n=1}^{d_1^j}U_n=Z_j+\sum_{q\geq2}\sum_{(n_1,\dots,n_q)\in \mathbb{I}_{d_1^j}^q}\beta_{(n_1,\dots,n_q)}[U_{n_1},\dots,U_{n_q}].
\]
Notice that the group element $\prod_{n=1}^{d_1^j}U_n$ coincides with $y(\{X_{ni}\})=Z_j+\sum_{l=j+1}^{k}A_l$ from its definition.
Thus we can explicitly write $A_l$ by
\[
A_l=\sum_{q\geq 2}\sum_{(n_1,\dots,n_q)\in\mathbb{I}_{d_1^j}^q}\sum_{m_1+\cdots +m_q=l}\beta_{(n_1,\dots,n_q)}[P_{m_1}(U_{n_1}),\dots,P_{m_q}(U_{n_q})].
\]

From (\ref{equn}),
we can compute $\|P_m(U_n)\|_m$ by
\[
\|P_m(U_n)\|_m\leq\begin{cases}
 0, & m=1,\dots,j-1,\\
\|X_{n1}\|_1^j, & m=j,\\
\sum_{(i_1,\dots,i_m)\in\mathbb{I}_j^m}|\gamma_{(i_1,\dots,i_m)}|
\|X_{n1}\|_1^m, & m=j+1,\dots,k.
\end{cases}
\]
In any case,
$\|P_m(U_n)\|_m$ is less than or equal to $\tilde{\gamma}\nu_j^{\frac{m}{j}},$
where $\tilde{\gamma}\!=\!\max\big\{1,\sum_{(i_1,\dots,i_m)\in \mathbb{I}_j^m}\!|\gamma_{(i_1,\dots,i_m)}|\big\}$.
By the subadditivity of the norm $\|\cdot\|_m$ and Lemma \ref{lemsubmulti},
we have
\begin{align*}
 \|A_l\|_l&\leq\sum_{q\geq2}\sum_{(n_1,\dots,n_{q})\in\mathbb{I}_{d_1^j}^q}\sum_{m_1+\cdots+m_q=l}2^l  |\beta_{(n_1,\dots,n_q)}|  \|P_{m_1}(U_{n_1})\|_{m_1}\cdots\|P_{m_q}(U_{n_q})\|_{m_q}
 \\
 &\leq \sum_{q\geq2}\sum_{(n_1,\dots,n_q)\in \mathbb{I}_{d_1^j}^q}\sum_{m_1+\cdots+m_q=l}2^l  |\beta_{(n_1,\dots,n_q)}| \big(\tilde{\gamma}\nu_j^{\frac{m_1}{j}}\big)\cdots \big(\tilde{\gamma}\nu_j^{\frac{m_q}{j}}\big)
 \\
 &= \sum_{q\geq2}\sum_{(n_1,\dots,n_q)\in \mathbb{I}_{d_1^j}^q}\sum_{m_1+\cdots+m_q=l} 2^l  |\beta_{(n_1,\dots,n_q)}|  \tilde{\gamma}^q  \nu_j^{\frac{l}{j}}
 \\
 &\leq \sum_{q\geq2}\sum_{(n_1,\dots,n_q)\in \mathbb{I}_{d_1^j}^q}\binom{l+q-1}{q-1}   2^l  |\beta_{(n_1,\dots,n_q)}|  \tilde{\gamma}^q  \nu_j^{\frac{l}{j}}
 \\
 &\leq\sum_{q\geq2}\sum_{(n_1,\dots,n_q)\in \mathbb{I}_{d_1^j}^q} \binom{l+q-1}{q-1}  2^l  \tilde{\beta}  \tilde{\gamma}^q  \nu_j^{\frac{l}{j}}
 \\
 &\leq\sum_{q\geq2}d_1^{jq}  \binom{l+q-1}{q-1}  2^l  \tilde{\beta}  \tilde{\gamma}^q  \nu_j^{\frac{l}{j}}
 \leq\sum_{q\geq2}d_1^{jk}  \binom{l+q-1}{q-1}  2^l  \tilde{\beta}  \tilde{\gamma}^k  \nu_j^{\frac{l}{j}}
 \\
 &\leq d_1^{jk}  2^{l+k-1}  2^l  \tilde{\beta}  \tilde{\gamma}^k  \nu_j^{\frac{l}{j}}
 \leq 8^k  d_1^{jk} \tilde{\beta}  \tilde{\gamma}^k  \nu_j^{\frac{l}{j}},
\end{align*}
where $\tilde{\beta}=\max\big\{|\beta_{(n_1,\dots,n_q)}|\mid (n_1,\dots,n_q)\in \mathbb{I}_{d_1^j}^q,\,q=2,\dots,k\big\}$.

Notice that the constants $\tilde{\beta}$ and $\tilde{\gamma}$ depend only on the constants $\beta_{(n_1,\dots,n_q)}$ and $\gamma_{(i_1,\dots,i_m)}$ in the BCH formulas (\ref{BCH2}) and (\ref{BCHfinal}).
Since $m$ and $q$ are not greater than $k$,
they ultimately depend on $d_1$, $j$ and $k$.
Hence we have obtained a desired constant $\theta_j$ by letting
$\theta_j=8^kd_1^{jk}\tilde{\beta}\tilde{\gamma}^k.$
\end{proof}

The norm $\|\cdot\|_j$ controls the combinatorial distance $\mathsf{d}_{\mathrm{com}}$ as follows.

\begin{Lemma}\label{lemCC}
For any $Z_j\in V_j$ with $\nu_j=\|Z_j\|_j$ and any set of horizontal vector $\{X_{ni}\}$ adjusted to $Z_j$,
there is an upper bound of the combinatorial distance given by
\[
\mathsf{d}_{\mathrm{com}}(\{X_{ni}\})\leq jd_1^{\frac{2j-1}{2}}\nu_j^{\frac{1}{j}}.
\]
\end{Lemma}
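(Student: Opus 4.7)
The strategy is to mimic the derivation of inequality (\ref{eq2step2}) from the $2$-step case, but with exponent $2j$ in place of exponent $4$. The three adjustment conditions (\ref{xni1})--(\ref{xni3}) do all the work; no further use of the BCH formula or the Lie bracket structure is required.

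First I would use condition (\ref{xni3}) to collapse the combinatorial distance: since $\|X_{n1}\|_1=\cdots=\|X_{nj}\|_1$ for every $n$,
\[
\mathsf{d}_{\mathrm{com}}(\{X_{ni}\})=\sum_{n=1}^{d_1^j}\sum_{i=1}^j\|X_{ni}\|_1=j\sum_{n=1}^{d_1^j}\|X_{n1}\|_1.
\]
Next, combining (\ref{xni2}) with (\ref{xni3}) gives the cleaner identity
\[
\nu_j^2=\sum_{n=1}^{d_1^j}\|X_{n1}\|_1^2\cdots\|X_{nj}\|_1^2=\sum_{n=1}^{d_1^j}\|X_{n1}\|_1^{2j}.
\]

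Now the point is just to compare $\ell^1$ and $\ell^{2j}$ norms of the vector $(\|X_{n1}\|_1)_{n=1,\dots,d_1^j}\in\R^{d_1^j}$. By H\"older's inequality (the power-mean inequality) applied to a vector of length $N=d_1^j$ with the exponent pair $(1,2j)$,
\[
\sum_{n=1}^{d_1^j}\|X_{n1}\|_1\leq \big(d_1^j\big)^{1-\frac{1}{2j}}\Bigg(\sum_{n=1}^{d_1^j}\|X_{n1}\|_1^{2j}\Bigg)^{\frac{1}{2j}}=d_1^{\frac{2j-1}{2}}\nu_j^{\frac{1}{j}}.
\]
Multiplying by $j$ yields $\mathsf{d}_{\mathrm{com}}(\{X_{ni}\})\leq jd_1^{\frac{2j-1}{2}}\nu_j^{\frac{1}{j}}$, as required.

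There is no real obstacle here; once (\ref{xni3}) is used to identify all the $\|X_{ni}\|_1$ within a fixed $n$, the estimate is a one-line application of H\"older, exactly parallel to (\ref{eq2step2}) in the $2$-step proof (which is the special case $j=2$, giving the constant $2d_1^{3/2}$). The only thing to watch is that the exponent bookkeeping matches: the norm $\|\cdot\|_1$ appears $2j$ times in the expression for $\nu_j^2$, so the correct H\"older exponent is $2j$, producing the power $\nu_j^{1/j}$ and the factor $d_1^{(2j-1)/2}$.
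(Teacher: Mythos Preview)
Your proof is correct and is essentially identical to the paper's own argument: both use (\ref{xni3}) to reduce $\mathsf{d}_{\mathrm{com}}$ to $j\sum_n\|X_{n1}\|_1$, apply H\"older with exponent $2j$ on a vector of length $d_1^j$, and then invoke (\ref{xni2}) (together with (\ref{xni3})) to identify $\big(\sum_n\|X_{n1}\|_1^{2j}\big)^{1/(2j)}$ with $\nu_j^{1/j}$. The only cosmetic difference is the order in which H\"older and the identity $\nu_j^2=\sum_n\|X_{n1}\|_1^{2j}$ are displayed.
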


\begin{proof}
By the assumption (\ref{xni3})
we obtain
\[
 \mathsf{d}_{\mathrm{com}}(\{X_{ni}\})=j \sum_{n=1}^{d_1^j}\|X_{n1}\|_1.
\]
By H\"older's inequality,
\[j \sum_{n=1}^{d_1^j}\|X_{n1}\|_1\leq jd_1^{\frac{2j-1}{2}}\sqrt[2j]{\sum_{n=1}^{d_1^j}\|X_{n1}\|_1^{2j}}.
\]
By the equality (\ref{xni2}),
\[
 jd_1^{\frac{2j-1}{2}}\sqrt[2j]{\sum_{n=1}^{d_1^j}\|X_{n1}\|_1^{2j}}=jd_1^{\frac{2j-1}{2}}\nu_j^{\frac{1}{j}}.
\]
The above three inequalities prove the lemma.
\end{proof}

We have considered a set of vectors adjusted to a vector $Z_j$ in each layer $V_j$.
We will introduce a similar notion for a vector $Z$ in the whole Lie algebra $\g$.

Let $Z=\sum_{j=1}^k Z_j$ be a vector in $\g=\bigoplus_{j=1}^k V_j$, $Z_j\in V_j$.
We will inductively define a set of vectors $\bigl\{X_{ni}^{(j)}\bigr\}_{n=1,\dots,d_1^j,\,i=1,\dots,j}$ for $j=1,\dots,k$ as follows.
For $Z_1\in V_1$,
define a set of vectors $\bigl\{X_{ni}^{(1)}\bigr\}=\{Z_1,0,\dots,0\}$.
Let $\bigl\{X_{ni}^{(2)}\bigr\}$ be a set of horizontal vectors adjusted to $Z_2$.
Then for a~couple of sets of horizontal vectors $\bigl(\bigl\{X_{ni}^{(1)}\bigr\},\bigl\{X_{ni}^{(2)}\bigr\}\bigr)$,
there are vectors $B_l^{(2)}\bigl(\bigl\{X_{ni}^{(1)}\bigr\},\bigl\{X_{ni}^{{(2)}}\bigr\}\bigr)\allowbreak\in V_l$ for $l>2$ such that
\[
y\bigl(\bigl\{X_{ni}^{(1)}\bigr\}\bigr)\cdot y\bigl(\bigl\{X_{ni}^{(2)}\bigr\}\bigr) =Z_1+Z_2+\sum_{l=3}^kB_l^{(2)}\bigl(\bigl\{X_{ni}^{(1)}\bigr\},\bigl\{X_{ni}^{(2)}\bigr\}\bigr).
\]
Next let $\bigl\{X_{ni}^{(3)}\bigr\}$ be a set of horizontal vectors adjusted to $Z_3-B_3^{(2)}$.
Then for a triple of sets of horizontal vectors $\bigl(\bigl\{X_{ni}^{(1)}\bigr\},\bigl\{X_{ni}^{(2)}\bigr\},\bigl\{X_{ni}^{(3)}\bigr\}\bigr)$,
there are vectors $B_l^{(3)}\bigl(\bigl\{X_{ni}^{(1)}\bigr\},\bigl\{X_{ni}^{(2)}\bigr\},\bigl\{X_{ni}^{(3)}\bigr\}\bigr)\allowbreak\in V_l$ for $l>3$ such that
\[
\prod_{j=1}^3y\big(\bigl\{X_{ni}^{(j)}\bigr\}\big)=\sum_{l=1}^3Z_l+\sum_{l=4}^k B_l^{(3)}\bigl(\bigl\{X_{ni}^{(1)}\bigr\},\bigl\{X_{ni}^{(2)}\bigr\},\bigl\{X_{ni}^{(3)}\bigr\}\bigr).
\]
In this way,
we can inductively define a set of horizontal vectors $\bigl\{X_{ni}^{(j)}\bigr\}$ and error vectors $B_l^{(j)}\bigl(\bigl\{X_{ni}^{(1)}\bigr\},\dots,\bigl\{X_{ni}^{(j)}\bigr\}\bigr)$.
We will summarize this argument in the following definition.

\begin{Definition}
 For a vector $\sum_{j=1}^k Z_j\in\bigoplus V_j$,
 sets of horizontal vectors $\bigl\{X_{ni}^{(1)}\bigr\},\dots,\bigl\{X_{ni}^{(k)}\bigr\}$ and vectors $B_l^{(j)}\bigl(\bigl\{X_{ni}^{(1)}\bigr\},\dots,\bigl\{X_{ni}^{(j)}\bigr\}\bigr)\in V_l$, $j=1,\dots,k$, are inductively defined by
 \begin{itemize}\itemsep=0pt
 \item[(X1)] $\bigl\{X_{ni}^{(1)}\bigr\}=\{Z_1,0,\dots,0\}$,
 \item[(B1)] $B_l^{(1)}\bigl(\bigl\{X_{ni}^{(1)}\bigr\}\bigr)=0$ for $l=1,\dots,k$,
 \item[(Xj)] $\bigl\{X_{ni}^{(j+1)}\bigr\}$ is a set of horizontal vectors adjusted to $Z_{j+1}-B_{j+1}^{(j)}\big(\bigl\{X_{ni}^{(1)}\bigr\},\dots,\bigl\{X_{ni}^{(j)}\bigr\}\big)$,
 \item[(Bj)] $B_l^{(j)}\big(\bigl\{X_{ni}^{(1)}\bigr\},\dots,\bigl\{X_{ni}^{(j)}\bigr\}\big) =P_l\big(\prod_{m=1}^jy\big(\bigl\{X_{ni}^{(m)}\bigr\}\big)\big)$ for $l=j+1,\dots,k$.
 \end{itemize}
 We call $\big(\bigl\{X_{ni}^{(1)}\bigr\},\dots,\bigl\{X_{ni}^{(k)}\bigr\}\big)$ a $k$-tuple of sets of horizontal vectors adjusted to $Z$,
 and call $B_{l}^{(j)}\big(\bigl\{X_{ni}^{(1)}\bigr\},\dots,\bigl\{X_{ni}^{(j)}\bigr\}\big)$ an $(l,j)$-error vector.
\end{Definition}

We will simply write $B_l^{(j)}$.
Since $\g$ is $k$-step,
the error vectors $B_l^{(k)}$ vanishes and
\[
Z=\sum_{j=1}^k Z_j=\prod_{j=1}^ky\big(\bigl\{X_{ni}^{(j)}\bigr\}\big).
\]

\begin{Remark}
 Since the choice of a set of horizontal vectors adjusted to $Z_j\in V_j$ is not unique,
 the choice of error vectors is not unique too.
\end{Remark}

The norm of error vectors $\big\|B_l^{(j)}\big\|_l$ can be controlled as follows.

\begin{Lemma}\label{lempolynomial}
 For $Z=\sum_{j=1}^k Z_j\in\g$ with $\nu_j=\|Z_j\|_j$,
 let $\big(\bigl\{X_{ni}^{(1)}\bigr\},\dots,\bigl\{X_{ni}^{(k)}\bigr\}\big)$ be a $k$-tuple of sets of horizontal vectors adjusted to $Z$.
 For $j=1,\dots,k-1$ and $l=j+1,\dots,k$,
 there are polynomials $Q_{lj}(\beta_1,\beta_2,\dots,\beta_k)$ such that
 \begin{gather*}
 Q_{lj}(0,\dots,0)=0\qquad \text{and}\qquad
 \bigl\|B_l^{(j)}\bigr\|_l\leq Q_{lj}\big(\nu_1,\sqrt{\nu_2},\dots,\sqrt[k]{\nu_k}\big).
 \end{gather*}
 Moreover, their coefficients depend only on the dimensions $d_1$ and $k$.
\end{Lemma}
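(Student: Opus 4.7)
The plan is to proceed by induction on $j$, from $j=1$ up to $j=k-1$. The base case $j=1$ is immediate: by definition (B1), $B_l^{(1)} = 0$, so the zero polynomial works. For the inductive step, assume that polynomials $Q_{l,j-1}$ satisfying the claim have been constructed for all $l \geq j$; in particular $\|B_j^{(j-1)}\|_j \leq Q_{j,j-1}(\beta_1,\dots,\beta_k)$ where $\beta_i := \nu_i^{1/i}$. Setting $W := \prod_{m=1}^{j-1} y(\{X_{ni}^{(m)}\})$, the task is to estimate $B_l^{(j)} = P_l(W \cdot y(\{X_{ni}^{(j)}\}))$ via the BCH formula (\ref{BCH1}).

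By the recursive construction of the adjusted tuple, the layer projections of $W$ are $P_l(W) = Z_l$ for $l \leq j-1$ and $P_l(W) = B_l^{(j-1)}$ for $l \geq j$; since $\{X_{ni}^{(j)}\}$ is adjusted to $Z_j - B_j^{(j-1)}$, the layer projections of $y(\{X_{ni}^{(j)}\})$ are $0$ for $l \leq j-1$, $Z_j - B_j^{(j-1)}$ for $l=j$, and the error vectors $A_l(\{X_{ni}^{(j)}\})$ for $l \geq j+1$. Lemma~\ref{lemvjbound} combined with the inductive bound on $\|B_j^{(j-1)}\|_j$ gives $\|A_l(\{X_{ni}^{(j)}\})\|_l \leq \theta_j (\nu_j + Q_{j,j-1}(\beta_1,\dots,\beta_k))^{l/j}$. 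I would then apply BCH to expand $W \cdot y(\{X_{ni}^{(j)}\})$ as the sum of the two factors plus iterated brackets of their layer projections; since $\g$ is $k$-step only brackets of length $p \leq k$ contribute, so the sum is finite. Projecting onto $V_l$ and applying Lemma~\ref{lemsubmulti} iteratively bounds each surviving bracket in norm by a constant depending only on $k$ times the product of the norms of its constituent projections.

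The main obstacle is the fractional exponent $l/j>1$ appearing in the $A_l$ bound, since $R^{l/j}$ with $R := \nu_j + Q_{j,j-1}(\beta_1,\dots,\beta_k)$ is not a priori polynomial in the $\beta_i$'s. I would resolve this via the elementary inequality $x^{\alpha} \leq x^{\lfloor\alpha\rfloor} + x^{\lceil\alpha\rceil}$ valid for $x \geq 0$ and $\alpha \geq 1$: since $R$ is a polynomial in the $\beta_i$'s vanishing at the origin and $\lfloor l/j \rfloor \geq 1$, both $R^{\lfloor l/j \rfloor}$ and $R^{\lceil l/j \rceil}$ are polynomials in the $\beta_i$'s vanishing at the origin, giving the desired polynomial bound on $\|A_l\|_l$ (at the cost of breaking homogeneity, which the statement does not require). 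Assembling the bounds via the triangle inequality, Lemma~\ref{lemsubmulti}, and the inductive hypothesis produces $Q_{lj}$; its coefficients are explicit polynomial combinations of the BCH constants $\alpha,\beta,\gamma$ from (\ref{BCH1})--(\ref{BCHfinal}), the constants $\theta_j$, and powers of $2$, all of which depend only on $d_1$ and $k$.
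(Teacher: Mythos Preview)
Your proposal is correct and follows essentially the same inductive scheme as the paper's own proof: expand the product of $W=\prod_{m<j}y\bigl(\bigl\{X_{ni}^{(m)}\bigr\}\bigr)$ with $y\bigl(\bigl\{X_{ni}^{(j)}\bigr\}\bigr)$ via the BCH formula~(\ref{BCH1}), bound each layer projection of the two factors by a polynomial in $\beta_1,\dots,\beta_k$ using the inductive hypothesis and Lemma~\ref{lemvjbound}, and then control the iterated brackets with Lemma~\ref{lemsubmulti} and the triangle inequality. You are in fact more explicit than the paper on one point: the paper simply asserts that the bound $\|A_l\|_l\leq\theta_j\,\|Z_j-B_j^{(j-1)}\|_j^{\,l/j}$ from Lemma~\ref{lemvjbound} is dominated by a polynomial in the $\beta_i$, whereas you supply a concrete device (the inequality $x^{\alpha}\leq x^{\lfloor\alpha\rfloor}+x^{\lceil\alpha\rceil}$ for $x\geq 0$, $\alpha\geq 1$) to justify this step.
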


\begin{proof}
 We prove the assertion by inductions on $j$.
 When $j=1$,
 then $B_l^{(1)}=0$ for all $l=2,\dots,k$,
 so the lemma trivially holds.

 Assume that the lemma holds for $j>1$.
 From the definition of the $l$-error vector $A_l=A_l\big(\bigl\{X_{ni}^{(j+1)}\bigr\}\big)$ and the $(l,j)$-error vector $B_l^{(j)}=B_l^{(j)}\big(\bigl\{X_{ni}^{(1)}\bigr\},\dots,\bigl\{X_{ni}^{(j)}\bigr\}\big)$,
 we have
 \[
 y\big(\bigl\{X_{ni}^{(j+1)}\bigr\}\big)=Z_{j+1}+\sum_{l=j+2}^kA_l,
 \]
 and
 \[
 \prod_{l=1}^{j}y\big(\bigl\{X_{ni}^{(l)}\bigr\}\big)=\sum_{l=1}^{j}Z_l+\sum_{l=j+1}^kB_l^{(j)}.
 \]
 In particular,
 we can see that the $(l,j+1)$-error vector $B_l^{(j+1)}$ can be written by using the $l$-error vector $A_l$ and $(l,j)$-error vector $B_l^{(j)}$,
 \begin{align*}
\sum_{l=1}^{j+1}Z_l+\sum_{l=j+2}^kB_l^{(j+1)}
 &=\prod_{l=1}^{j+1}y\big(\bigl\{X_{ni}^{(l)}\bigr\}\big)
 =\Bigg(\sum_{l=1}^{j}Z_l+\sum_{l=j+1}^kB_l^{(j)}\Bigg)\Bigg(Z_{j+1}+\sum_{l=j+2}^kA_l\Bigg).
 \end{align*}
 More precisely,
 let $\{S_p\}_{p=1,\dots,2k}$ be the finite set of vectors defined by
 \[
 S_p=\begin{cases}
 Z_p, & p=1,\dots,j,
 \\
 B_p^{(j)}, & p=j+1,\dots,k,
 \\
 0, & p=k+1,\dots,k+j,
 \\
 Z_{j+1}, & p=k+j+1,
 \\
 A_{p-k}, & p=k+j+2,\dots,2k.
 \end{cases}
 \]
 Denote by $\deg(S_p)$ the number of the layer in which the vector $S_p\in\g=\bigoplus V_j$ is.
 For $p=1,\dots,2k$,
let $\tilde{Q}_p\big(\nu_1,\dots,\sqrt[k]{\nu_k}\big)$ be the polynomial which controls $\|S_p\|_p$ $(\|S_p\|_{p-k}$ for $k+1\leq p\allowbreak\leq 2k$) so that it attains zero at $(0,\dots,0)$ and the coefficients depend on $d_1$ and $k$.
We can choose such polynomials by the induction hypothesis and Lemma \ref{lemvjbound}.

 By applying the BCH formula (\ref{BCH1}),
 we have
 \begin{gather*}
 \Bigg(\sum_{l=1}^{j}Z_l+\sum_{l=j+1}^kB_l^{(j)}\Bigg)\Bigg(Z_{j+1}+\sum_{l=j+2}^kA_l\Bigg)
 \\ \qquad
{} = \Bigg(\sum_{i=1}^kS_p\Bigg)\Bigg(\sum_{i=1}^{k}S_{p+i}\Bigg)
 =\sum_{i=1}^{2k}S_i+\sum_{q\geq 2}\sum_{(p_1,\dots,p_q)\in \mathbb{I}_{2k}^q}\alpha_{(p_1,\dots,p_q)}[S_{p_1},\dots,S_{p_q}].
 \end{gather*}
 Thus the $(l,j+1)$-error vector $B_l^{(j+1)}$ can be written by
 \[
 B_l^{(j+1)}=\sum_{q=2}^k\sum_{(p_1,\dots,p_q)\in\mathbb{I}_{2k}^q,\, \deg(S_{p_1})+\cdots+\deg(S_{p_q})=l}\alpha_{(p_1,\dots,p_q)}[S_{p_1},\dots,S_{p_q}].
 \]

 By the triangle inequality and Lemma \ref{lemsubmulti},
 \[
 \big\|B_l^{(j+1)}\big\|_l\leq \sum_{q=2}^k\sum_{(p_1,\dots,p_q)\in\mathbb{I}_{2k}^q,\,\deg(S_{p_1}) +\cdots+\deg(S_{p_q})=l}|\alpha_{p_1,\dots,p_q}|2^{p_1\wedge\cdots\wedge p_q}\prod_{i=1}^q\tilde{Q}_{p_i}\big(\nu_1,\dots,\sqrt[k]{\nu_k}\big),
 \]
 where $p_1\wedge\cdots\wedge p_q$ is the minimum of $p_1,\dots,p_q$.
\end{proof}

We introduce a function $\mathsf{d}_{\mathrm{com}}^{(j)}\colon \mathcal{X}^j\to\R$ which can be regarded as the combinatorial distance function for a $j$-tuple of sets of horizontal vectors.

\begin{Definition}
 For a $j$-tuple of sets of horizontal vectors $\big(\bigl\{X_{ni}^{(1)}\bigr\},\dots,\bigl\{X_{ni}^{(j)}\bigr\}\big)$,
 define
 \[\mathsf{d}_{\mathrm{com}}^{(j)}\big(\bigl\{X_{ni}^{(1)}\bigr\},\dots,\bigl\{X_{ni}^{(j)}\bigr\}\big)= \sum_{l=1}^j\mathsf{d}_{\mathrm{com}}\big(\bigl\{X_{ni}^{(l)}\bigr\}\big).\]
\end{Definition}

Lemma \ref{lemcom} and the triangle inequality imply the following lemma.

\begin{Lemma}\label{lemcom2}
 For any $j$-tuple of sets of horizontal vectors $\big(\bigl\{X_{ni}^{(1)}\bigr\},\dots,\bigl\{X_{ni}^{(j)}\bigr\}\big)$,
 we have
 \[
 \mathsf{d}_{cc}\Bigg(\prod_{l=1}^jy\big(\bigl\{X_{ni}^{(l)}\bigr\}\big)\Bigg)\leq 2^{k-1}\mathsf{d}_{\mathrm{com}}^{(j)}\big(\bigl\{X_{ni}^{(1)}\bigr\}, \dots,\bigl\{X_{ni}^{(j)}\bigr\}\big).
 \]
\end{Lemma}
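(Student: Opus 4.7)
The plan is to prove Lemma \ref{lemcom2} as a direct consequence of Lemma \ref{lemcom} combined with the subadditivity of $\mathsf{d}_{cc}$ under group multiplication. Since $\mathsf{d}_{cc}$ is left-invariant, the distance from the identity to a product $g_1 \cdots g_j$ satisfies the triangle inequality $\mathsf{d}_{cc}(g_1 \cdots g_j) \leq \sum_{l=1}^{j} \mathsf{d}_{cc}(g_l)$; this reduces the problem to bounding $\mathsf{d}_{cc}(y(\{X_{ni}^{(l)}\}))$ individually for each $l$.

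First I would apply this triangle inequality with $g_l = y(\{X_{ni}^{(l)}\})$, yielding
\[
\mathsf{d}_{cc}\Bigg(\prod_{l=1}^j y\big(\bigl\{X_{ni}^{(l)}\bigr\}\big)\Bigg) \leq \sum_{l=1}^j \mathsf{d}_{cc}\big(y\big(\bigl\{X_{ni}^{(l)}\bigr\}\big)\big).
\]
Next, for each $l = 1, \dots, j$, Lemma \ref{lemcom} applied to the set of horizontal vectors $\{X_{ni}^{(l)}\}$ (which consists of vectors in $V_1$, so the hypothesis is satisfied) gives
\[
\mathsf{d}_{cc}\big(y\big(\bigl\{X_{ni}^{(l)}\bigr\}\big)\big) \leq 2^{k-1} \mathsf{d}_{\mathrm{com}}\big(\bigl\{X_{ni}^{(l)}\bigr\}\big).
\]
Summing over $l$ from $1$ to $j$ and invoking the definition of $\mathsf{d}_{\mathrm{com}}^{(j)}$ yields the claimed inequality.

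There is no substantive obstacle here: the lemma is essentially a bookkeeping statement that packages Lemma \ref{lemcom} into the multi-tuple formalism needed later. The only mild subtlety is to make sure the same constant $2^{k-1}$ (depending only on the step $k$ of $G$, not on $j$) is used uniformly across the terms in the sum, which is automatic since each $\{X_{ni}^{(l)}\}$ sits in the same $k$-step Carnot group $G$. Once this lemma is in place, it will be applied with $j = k$ in the proof of Theorem \ref{propmain}, where $\prod_{l=1}^k y(\{X_{ni}^{(l)}\}) = Z$ by construction of the $k$-tuple adjusted to $Z$, so controlling $\mathsf{d}_{cc}(Z)$ reduces to controlling $\mathsf{d}_{\mathrm{com}}^{(k)}$, which in turn will be controlled through Lemmas \ref{lemCC} and \ref{lempolynomial}.
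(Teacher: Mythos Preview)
Your proposal is correct and matches the paper's approach exactly: the paper simply states that Lemma~\ref{lemcom2} follows from Lemma~\ref{lemcom} together with the triangle inequality, which is precisely the argument you wrote out.
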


Now we are ready to prove Theorem~\ref{propmain}.
We show the following technical proposition.

\begin{Proposition}
 There are positive constants $\epsilon_1,\dots,\epsilon_k$,
 which depend only on $d_1$ and $k$,
 such that if a vector $Z=\sum_{j=1}^k Z_j$ satisfies $\|Z_j\|_j\leq \epsilon_j$,
 then there is a $k$-tuple of sets of horizontal vectors $\big(\bigl\{X_{ni}^{(1)}\bigr\},\dots,\bigl\{X_{ni}^{(k)}\bigr\}\big)$ adjusted to $Z$ such that
 \[
 \mathsf{d}_{\mathrm{com}}^{(k)}\big(\bigl\{X_{ni}^{(1)}\bigr\},\dots, \bigl\{X_{ni}^{(k)}\bigr\}\big)\leq \frac{1}{2^{k-1}}.
 \]
\end{Proposition}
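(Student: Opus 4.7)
The plan is to reduce the problem to a single-parameter choice by setting $\epsilon_j = \eta^j$ for a small $\eta > 0$ depending only on $d_1$ and $k$. Under this scaling the hypothesis $\|Z_j\|_j \leq \epsilon_j$ becomes $\sqrt[j]{\nu_j} \leq \eta$ uniformly in $j$, which is convenient because every bound produced by Lemmas \ref{lemvjbound}, \ref{lemCC}, and \ref{lempolynomial} naturally involves such fractional powers. This choice also respects the homogeneity $\delta_t$ of Carnot groups.

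The next step is to bound $\mathsf{d}_{\mathrm{com}}(\{X^{(j)}_{ni}\})$ term by term. For $j=1$ we have $\{X^{(1)}_{ni}\} = \{Z_1, 0,\dots,0\}$, so $\mathsf{d}_{\mathrm{com}}(\{X^{(1)}_{ni}\}) = \|Z_1\|_1 \leq \eta$. For $j \geq 2$, since $\{X^{(j)}_{ni}\}$ is adjusted to the vector $Z_j - B_j^{(j-1)}$, Lemma \ref{lemCC} gives
\[
\mathsf{d}_{\mathrm{com}}\bigl(\{X^{(j)}_{ni}\}\bigr) \leq j\, d_1^{(2j-1)/2}\, \bigl\|Z_j - B_j^{(j-1)}\bigr\|_j^{1/j}.
\]
Combining the triangle inequality with Lemma \ref{lempolynomial} yields
\[
\bigl\|Z_j - B_j^{(j-1)}\bigr\|_j \leq \eta^j + Q_{j,j-1}(\eta,\eta,\dots,\eta),
\]
where the right-hand polynomial has coefficients depending only on $d_1$ and $k$ and vanishes at $\eta = 0$.

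Summing over $j = 1,\dots,k$, the total combinatorial distance is dominated by a finite sum of continuous functions of $\eta$, each of which tends to $0$ as $\eta \to 0$ because $j$ ranges over a bounded set. Hence one may pick $\eta = \eta(d_1,k) > 0$ small enough that
\[
\sum_{j=1}^{k} j\, d_1^{(2j-1)/2} \bigl(\eta^j + Q_{j,j-1}(\eta,\dots,\eta)\bigr)^{1/j} \leq \frac{1}{2^{k-1}},
\]
and setting $\epsilon_j := \eta^j$ completes the argument.

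The main obstacle is organizational rather than computational: one must verify that the polynomials $Q_{j,j-1}$ coming from Lemma \ref{lempolynomial} truly depend only on $d_1$ and $k$ (not on the Carnot group itself), so that the threshold for $\eta$ can be absolute. This is guaranteed by the inductive construction in that lemma, whose coefficients trace back to the BCH constants $\alpha, \beta, \gamma$ in \eqref{BCH1}--\eqref{BCHfinal}, which are combinatorial. The scaling $\epsilon_j = \eta^j$ is essential; any non-homogeneous choice would force the $Q_{j,j-1}$-terms to be handled variable by variable, making a uniform threshold more delicate to extract.
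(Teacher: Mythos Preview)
Your argument is correct and follows a genuinely different route from the paper's. The paper proceeds by induction on the nilpotency step: assuming constants $\tilde{\epsilon}_1,\dots,\tilde{\epsilon}_{k-1}$ work for the $(k-1)$-step quotient $G/G_k$, it rescales them via the dilation (setting $\epsilon_j = T^j\tilde{\epsilon}_j$ for $j<k$) so as to leave room for the last layer, and then invokes Lemma~\ref{lempolynomial} only once, to bound $\|B_k^{(k-1)}\|_k$. You instead bound every term $\mathsf{d}_{\mathrm{com}}\bigl(\{X^{(j)}_{ni}\}\bigr)$ directly and simultaneously via Lemmas~\ref{lemCC} and~\ref{lempolynomial}, collapsing the choice of $\epsilon_1,\dots,\epsilon_k$ to a single small parameter $\eta$. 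Your approach is shorter and builds the natural dilation homogeneity $\epsilon_j=\eta^j$ into the ansatz from the outset; the paper's inductive proof is more constructive (it produces $\epsilon_j$ recursively from the $(k-1)$-step constants) and makes the role of the quotient structure explicit. One small point worth tightening: the step $\|B_j^{(j-1)}\|_j \leq Q_{j,j-1}(\eta,\dots,\eta)$ tacitly uses that $Q_{j,j-1}$ is monotone on $[0,\infty)^k$, which Lemma~\ref{lempolynomial} does not state, though it is clear from its proof (the polynomials are built as sums of products with non-negative coefficients); alternatively you may simply replace $Q_{j,j-1}(\eta,\dots,\eta)$ by $\sup_{[0,\eta]^k}Q_{j,j-1}$, which still tends to $0$ as $\eta\to0$. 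Also, your final displayed sum should begin at $j=2$, the $j=1$ contribution being the separately handled term $\|Z_1\|_1\le\eta$.
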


This proposition implies Theorem \ref{propmain}.
Indeed,
we can check that $Z=\prod_{j=1}^ky\big(\bigl\{X_{ni}^{(j)}\bigr\}\big)$ since~$G$ is $k$-step.
Combined with Lemma \ref{lemcom2},
\[
\mathsf{d}_{cc}(Z)=\mathsf{d}_{cc}\Bigg(\prod_{j=1}^ky\big(\bigl\{X_{ni}^{(j)}\bigr\}\big)\Bigg)\leq 2^{k-1}\mathsf{d}_{\mathrm{com}}^{(k)}\big(\bigl\{X_{ni}^{(1)}\bigr\}, \dots,\bigl\{X_{ni}^{(k)}\bigr\}\big)\leq 1.
\]

\begin{proof}
 We will prove by induction on $k$.
 We have already shown the assertion for $2$-step Carnot group with $\epsilon_1=\frac{1}{2}$ and $\epsilon_2=\frac{1}{64d_1^3}$ in Theorem \ref{propstep2}.

 Assume that the assertion is true for $(k-1)$-step Carnot groups with the grading $\dim V_i=d_i$ with positive numbers $\tilde{\epsilon}_1,\dots,\tilde{\epsilon}_{k-1}$.
 Let $G_k$ be the subgroup of $G$ generated by $\{[x_1,\dots,x_k]_c\mid x_1,\dots,x_k\in G\}$.
 Notice that the quotient group $G/G_k$ is $(k-1)$-step Carnot group which has the grading $\bigoplus_{j=1}^{k-1}V_j$.
 From the induction hypothesis,
 if a vector $\sum_{j=1}^{k-1}Z_j$ satisfies $\|Z_j\|_j\leq \tilde{\epsilon}_j$,
 then there are $(k-1)$-tuple of sets of horizontal vectors $\big(\bigl\{X_{ni}^{(1)}\bigr\},\dots,\bigl\{X_{ni}^{(k-1)}\bigr\}\big)$ adjusted to $\sum_{j=1}^{k-1}Z_j$ such that
 \[
 \mathsf{d}_{\mathrm{com}}^{(k-1)}\big(\bigl\{X_{ni}^{(1)}\bigr\}, \dots,\bigl\{X_{ni}^{(k-1)}\bigr\}\big)\leq \frac{1}{2^{k-2}}.
 \]

 Moreover,
 for a positive number $t>0$,
 the $(k-1)$-tuple of sets of horizontal vectors $\big(\bigl\{tX_{ni}^{(1)}\bigr\},\dots,\bigl\{tX_{ni}^{(k-1)}\bigr\}\big)$ is adjusted to the vector $\sum_{j=1}^{k-1}t^jZ_j$,
 and satisfies
 \begin{equation}\label{tz2}
 \mathsf{d}_{\mathrm{com}}^{(k-1)}\big(\bigl\{tX_{ni}^{(1)}\bigr\},\dots,\bigl\{tX_{ni}^{(k-1)}\bigr\}\big)\leq \frac{t}{2^{k-2}}.
 \end{equation}

 Next we consider the product $\prod_{j=1}^{k-1}y\big(\bigl\{tX_{ni}^{(j)}\bigr\}\big)$ in the original group $G$.
 By using the \mbox{$(k,k-1)$}-error vector $B_k^{(k-1)}$,
 we can write
 \[
 \prod_{j=1}^{k-1}y\big(\bigl\{tX_{ni}^{(j)}\bigr\}\big)=\sum_{j=1}^{k-1}t^jZ_j+B_k^{(k-1)}.
 \]

By Lemma \ref{lempolynomial},
there is a polynomial $Q_{kk-1}$ such that
\[
\big\|B_k^{(k-1)}\big\|_k\leq Q_{kk-1}\bigl(t\tilde{\epsilon}_1,\dots,t\sqrt[k]{\tilde{\epsilon}_{k}}\bigr).
\]

 Now let $Z_k$ be a vector in $V_k$,
 $\nu_k=\|Z_k\|_k$,
 and $\bigl\{X_{ni}^{(k)}\bigr\}$ a set of horizontal vectors adjusted to $Z_k-B_k^{(k-1)}$.
 By the definition of $\mathsf{d}_{\mathrm{com}}^{(k)}$ and (\ref{tz2}),
 we have
 \begin{align*}
 \mathsf{d}_{\mathrm{com}}^{(k)}\big(\bigl\{tX_{ni}^{(1)}\bigr\}, \dots,\bigl\{tX_{ni}^{(k-1)}\bigr\},\bigl\{X_{ni}^{(k)}\bigr\}\big)
 &=\mathsf{d}_{\mathrm{com}}^{(k-1)}\big(\bigl\{tX_{ni}^{(1)}\bigr\}, \dots,\bigl\{tX_{ni}^{(k-1)}\bigr\}\big)+\mathsf{d}_{\mathrm{com}}\big(\bigl\{X_{ni}^{(k)}\bigr\}\big)
\\
& \leq \frac{t}{2^{k-2}}+\mathsf{d}_{\mathrm{com}}\big(\bigl\{X_{ni}^{(k)}\bigr\}\big).
 \end{align*}

Since the set of horizontal vectors $\bigl\{X_{ni}^{(k)}\bigr\}$ is adjusted to $Z_k-B_k^{(k-1)}$,
Lemma \ref{lemCC} yields
\begin{gather*}
\mathsf{d}_{\mathrm{com}}\big(\bigl\{X_{ni}^{(k)}\bigr\}\big)\leq kd_1^{\frac{2k-1}{2}}\big\|Z_k-B_k^{(k-1)}\big\|_k^{\frac{1}{k}}
\leq kd_1^{\frac{2k-1}{2}}\big(\nu_k+Q_{kk-1}\big(t\tilde{\epsilon}_1, \dots,t\sqrt[k]{\tilde{\epsilon}_{k}}\big)\big)^{\frac{1}{k}}.
\end{gather*}

Since the polynomial $Q_{kk-1}$ attains zero at $(0,\dots,0)\in \R^{k}$ and the coefficients depend only on $d_1$ and $k$,
there are positive numbers $T$, $\hat{\epsilon}_k$,
which depend only on the dimension $d_1$ and~$k$,
such that if $t\leq T$ and $\nu_k\leq \hat{\epsilon}_k$,
then
\[
\frac{t}{2^{k-2}}+\mathsf{d}_{\mathrm{com}}\big(\bigl\{X_{ni}^{(k)}\bigr\}\big)\leq \frac{1}{2^{k-1}}.
\]
We conclude the proposition by letting $\epsilon_j=T^j\tilde{\epsilon}_j$ for $j=1,\dots,k-1$ and $\epsilon_k=\min\big\{T^k\tilde{\epsilon}_k,\hat{\epsilon}_k\big\}$.
\end{proof}

\subsection*{Acknowledgements}

The author would appreciate to thank Professor Takumi Yokota for many insightful suggestions.
The author thanks the referees for many helpful comments on earlier drafts of the manuscript.
This research is supported by JSPS KAKENHI grant number 18K03298 and 20J13261.

\pdfbookmark[1]{References}{ref}
\LastPageEnding

\end{document}